\newtheorem{thm}{Theorem}[section]
\newtheorem{lemma}[thm]{Lemma}
\newtheorem{prop}[thm]{Proposition}
\numberwithin{equation}{section}
\theoremstyle{definition}
\newtheorem{rem}[thm]{Remark}
\newtheorem{example}[thm]{Example}
\newtheorem{definition}[thm]{Definition}
\newcommand{\bC}{{\mathbb{C}}}
\newcommand{\bN}{{\mathbb{N}}}
\newcommand{\bQ}{{\mathbb{Q}}}
\newcommand{\bR}{{\mathbb{R}}}
\newcommand{\bT}{{\mathbb{T}}}
\newcommand{\bH}{{\mathbb{H}}}
  \newcommand{\A}{{\mathcal{A}}}
  \newcommand{\C}{{\mathcal{C}}}
  \newcommand{\D}{{\mathcal{D}}}
  \newcommand{\F}{{\mathcal{F}}}
  \newcommand{\G}{{\mathcal{G}}}
\renewcommand{\L}{{\mathcal{L}}}
  \newcommand{\M}{{\mathcal{M}}}
  \newcommand{\N}{{\mathcal{N}}}
\renewcommand{\S}{{\mathcal{S}}}
  \newcommand{\U}{{\mathcal{U}}}
\begin{document}


\title[The parabolic algebra on Banach spaces]{The parabolic algebra on $L^p$ spaces}



\author[E. Kastis]{E. Kastis}
\address{Dept.\ Math.\ Stats.\\ Lancaster University\\
Lancaster LA1 4YF \\U.K. }

\email{l.kastis@lancaster.ac.uk}


\thanks{2010 {\it  Mathematics Subject Classification.}
 {47L75, 47L35 } \\
Key words and phrases: {operator algebra, nest algebra, reflexivity, binest}}

\begin{abstract}
The parabolic algebra was introduced by Katavolos and Power, in 1997, as the SOT - closed operator algebra acting on $L^2(\bR)$ that is generated by the translation and multiplication semigroups. In particular, they proved that this algebra is reflexive and is equal to the Fourier binest algebra, that is, to the algebra of operators that leave invariant the subspaces in the Volterra nest and its analytic counterpart.

We prove that a similar result holds for the corresponding algebras acting on $L^p(\bR)$, where $1< p<\infty$. In the last section, it is also shown that the reflexive closures of the Fourier binests on $L^p(\bR)$ are all order isomorphic for $1<p<\infty$.
\end{abstract}
\date{}

\maketitle

\section{Introduction}
Let $\{D_\mu, \mu \in \bR\}$ and $\{M_\lambda, \lambda \in \bR\}$  be the groups of translation and multiplication respectively acting on the Hilbert space $L^2(\bR)$, given by 
\[
D_\mu f(x) = f(x-\mu),\quad  M_\lambda f(x) = e^{i\lambda x}f(x).
\]
 It is well-known that these 1-parameter unitary groups are continuous in the strong operator topology (SOT), that they provide an irreducible representation of the Weyl-commutation relations, $M_\lambda D_\mu
= e^{i\lambda \mu} D_\mu M_\lambda$, and that the SOT-closed operator algebra they generate is the von Neumann algebra  $B(L^2(\bR))$ of all bounded operators. (See Taylor \cite{tay}, for example.) On the other hand it was shown by Katavolos and Power in \cite{kat-pow-1} that the strongly closed nonselfadjoint operator algebra  generated by the semigroups for $\mu \geq 0$ and $ \lambda \geq 0$ is a reflexive algebra, in the sense of Halmos \cite{rad-ros}, containing no self-adjoint operators, other than real multiples of the identity, and containing no nonzero finite rank operators. We consider here the operator algebras  $\A_{par}^p$ on $L^p(\bR)$ for $1<p<\infty$, which are similarly generated by the same semigroups, viewed now as bounded operators on $L^p(\bR)$. Our main result is that $\A_{par}^p$ is also reflexive and, moreover, is equal to $\A_{FB}^p$, the algebra of operators that leave invariant each subspace in the Fourier binest $\L_{FB}^p$  of closed subspaces given by
\[
 \L_{ FB}^p=  \{0\}\cup\{L^p[t,\infty), t\in\bR\}\cup \{e^{i\lambda x}H^p(\bR), \lambda\in\bR\}\cup\{L^p(\bR)\}
\]
where  $H^p(\bR)$ is the usual Hardy space for the upper half plane. This lattice of closed subspaces is a binest equal to the union of two complete continuous nests of closed subspaces.
\bigskip

Although the reflexivity of non selfadjoint operator algebras has been studied intensively over the last fifty years, the developments have been largely confined within the limits of operator algebras acting on Hilbert spaces. For example, general nest algebras, being the most characteristic class of reflexive noncommutative non selfadjoint operator algebras since they were introduced by Ringrose in 1965 \cite{rin},  have a well-developed general theory on Hilbert spaces (Davidson \cite{dav}). However, only sporadic results can be found for nest algebras on Banach spaces (see \cite{west}, \cite{spiv}, \cite{bou-dav}). 

On the other hand, the study of the reflexivity of non-selfadjoint algebras that are generated by semigroups of operators was begun by Sarason in 1966 \cite{sar}, where he proved that $H^\infty(\bR)$, viewed as a multiplication algebra on $H^2(\bR)$, is reflexive. Since then, several results about 2-parameter Lie semigroup algebras have been obtained. One of the aims in the analysis of reflexivity and related properties is to understand better the algebraic structure of these somewhat mysterious algebras. Establishing reflexivity can provide a route to constructing operators in the algebra and thereby deriving further algebraic properties. As we stated above, the reflexivity of $A_{par}^2$, known as the parabolic algebra, was obtained in \cite{kat-pow-1}. Furthermore, Levene and Power have shown (\cite{lev-pow-1}) the reflexivity of an analogous hyperbolic algebra\cite{kat-pow-2}, the algebra generated by the multiplication and dilation semigroups on $L^2(\bR)$. The latter semigroup is given by the operators $V_t$, with
\[V_tf(x)=e^{t/2}f(e^t x),\]
for $t\geq0$. Recently, Power and the author proved that the triple semigroup operator algebra that is generated by the translation, dilation and multiplication semigroups is also reflexive \cite{kas-pow}.

A complication in establishing the reflexivity of the parabolic and hyperbolic algebras on Hilbert space is the absence of an approximate identity of finite rank operators,
a key device in the theory of nest algebras (Erdos and Power \cite{erd-pow}, Davidson \cite{dav}). However, it was shown that the subspace of Hilbert-Schmidt operators is dense for both algebras and that these operators could be used as an alternative. On the other hand, we note that different reflexivity techniques were given in \cite{ano-kat-tod}, where the authors make use of direct integral decomposition arguments.
In our case, we define a right ideal of what we refer to as $(p,q)$-integral operators which we show is able to play the role of the (two-sided) ideal of Hilbert - Schmidt operators. As a substitute for the techniques of Hilbert space geometry and tensor product identifications used in \cite{kas-pow}, \cite{kat-pow-1}, \cite{lev-pow-1}, we make use of more involved measure theoretic arguments appropriate for the $(p,q)$-integrable operators.

We also obtain a number of properties of the parabolic algebra on $L^p(\bR)$, that correspond to the classical case.
Namely, $\A_{par}^p$ is antisymmetric (or triangular \cite{kad-sin}), in an appropriate sense, and $\A_{par}^p$ contains no non-trivial finite rank operators. Futhermore, the lattice of $\A_{par}^p$ is order isomorphic to the lattice of $\A_{par}^2$ for all $1<p<\infty$. 
\medskip
\section{Preliminaries}
\subsection {The Hardy space $H^p(\bR),\,p\in[1,\infty)$}
We start with two elementary density lemmas for the Hardy spaces $H^p(\bR)$ on the line, for $p\in(1,\infty)$.  The details of the theory of Hardy spaces can be found in \cite{hof}. 

For each $u$ in the open upper half plane $\bH^+$ of $\bC$ let
\[b_u(x)=\frac{1}{x+u},\,x\in\bR.\]
Then $b_u$ lies in $H^p(\bR)$, for every $p\in(1,\infty)$.
\begin{lemma}
The linear spans of the sets $\D_1= \{b_u|u\in\bH^+\},\,\D_2=\{b_ub_w|u, w\in\bH^+\}$ are both dense in $H^p(\bR)$, for $1<p<\infty$.
\end{lemma}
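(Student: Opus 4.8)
The plan is to establish density by duality, through the Hahn--Banach theorem. Since $H^p(\bR)$ is a closed subspace of $L^p(\bR)$ for $1<p<\infty$, a subset has dense linear span in it if and only if every functional in $(L^p)^*$ annihilating the subset also annihilates $H^p(\bR)$. Identifying $(L^p)^*$ with $L^q(\bR)$, where $\tfrac1p+\tfrac1q=1$, via the bilinear pairing $\langle f,g\rangle=\int_\bR fg\,\ud x$, the problem for $\D_1$ reduces to the following claim: if $g\in L^q(\bR)$ satisfies $\int_\bR g(x)b_u(x)\,\ud x=0$ for all $u\in\bH^+$, then $\int_\bR gf\,\ud x=0$ for every $f\in H^p(\bR)$.

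The engine for this claim is the Cauchy transform. Writing $w=-u\in\bH^-$, the hypothesis says that $G(w):=\tfrac{1}{2\pi i}\int_\bR \tfrac{g(x)}{x-w}\,\ud x$ vanishes identically on the lower half-plane. Now $G$ is analytic on $\bC\setminus\bR$, and by the Plemelj--Sokhotski relations the jump of its nontangential boundary values across $\bR$ recovers $g$; equivalently, $G|_{\bH^+}$ is the analytic extension of $P_+g\in H^q(\bR)$ and $G|_{\bH^-}$ that of $-P_-g$, where $P_\pm$ are the Riesz projections, which are bounded on $L^q$ for $1<q<\infty$. Since $G\equiv0$ on $\bH^-$ we get $P_-g=0$, so $g=P_+g$ is the boundary value of the $\bH^+$-analytic function $G|_{\bH^+}$ and hence $g\in H^q(\bR)$. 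Finally, for any $f\in H^p(\bR)$ the product $fg$ lies in $H^1(\bR)$ (by H\"older on each horizontal line together with analyticity) and therefore integrates to zero over the line, giving $\int_\bR gf\,\ud x=0$. These are standard Hardy space facts (see \cite{hof}), and they settle the density of $\operatorname{span}\D_1$.

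For $\D_2$ I would avoid a second transform argument and instead exploit the partial-fraction identity $b_ub_w=\tfrac{1}{w-u}(b_u-b_w)$ for $u\neq w$; this shows at once that $b_ub_w\in H^p(\bR)$ and that $\operatorname{span}\D_2$ contains every difference $b_u-b_w$, hence every finite combination $\sum_i c_ib_{u_i}$ with $\sum_i c_i=0$. To deduce density, I start from an approximant $\sum_i c_ib_{u_i}$ of a given $f\in H^p(\bR)$, available from the density of $\operatorname{span}\D_1$, and correct its coefficient sum $C=\sum_i c_i$ to zero by subtracting $C\,b_{it}$. The decisive point is the computation $\|b_{it}\|_{H^p}^p=t^{1-p}\int_\bR (s^2+1)^{-p/2}\,\ud s\to0$ as $t\to\infty$, valid precisely because $p>1$. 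Thus for large $t$ the corrected vector $\sum_i c_ib_{u_i}-C\,b_{it}$ has vanishing coefficient sum, so it lies in $\operatorname{span}\D_2$, and it remains within $\varepsilon$ of $f$; hence $\operatorname{span}\D_2$ is dense as well.

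The main obstacle is the boundary-value analysis in the second paragraph: one must justify, uniformly across all conjugate exponents $q\in(1,\infty)$, that the vanishing of the Cauchy transform on one half-plane forces $g$ into the corresponding Hardy space. For $q>2$ the Fourier transform of $g$ is only a tempered distribution, so the cleanest route is to rely on the $L^q$-boundedness of the Riesz projection and the Plemelj relations rather than on an explicit Fourier-support computation. The remaining ingredients, namely that $b_u,b_ub_w\in H^p(\bR)$ and that functions in $H^1(\bR)$ have zero integral over the line, are routine.
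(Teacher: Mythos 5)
Your proposal is correct and takes essentially the same route as the paper: for $\D_1$, the identical Hahn--Banach duality argument (the paper asserts the equivalence ``$\int_\bR b_u g=0$ for all $u\in\bH^+$ iff $g\in H^q(\bR)$'' which you justify via the Cauchy transform and Riesz projections, then both conclude via $fg\in H^1(\bR)$ having zero integral); for $\D_2$, the same partial-fraction identity combined with the fact that $b_{it}\to 0$ in $L^p(\bR)$ as $t\to\infty$. The paper packages this last step as the dominated-convergence limit $b_u-b_{ni}\to b_u$ with $b_u-b_{ni}=(ni-u)b_ub_{ni}\in\operatorname{span}\D_2$, while you correct the coefficient sum of a general approximant using the explicit computation $\|b_{it}\|_p^p=t^{1-p}\int_\bR(s^2+1)^{-p/2}\,ds$, but the substance is identical.
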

\begin{proof}
Fix some $p\in(1,\infty)$ and suppose that there exists some $f\in H^p(\bR)$ that does not lie in the closed linear span of $\D_1$. Then by the Hahn - Banach theorem, there is some function $g\in L^q(\bR)$, such that 
$\int_{\bR} b_u g=0,$ for all $u\in \bH^+$, and $\int_{\bR} fg\neq 0$. But 
\[\int_{\bR} b_u g=0, \,\forall u\in \bH^+ \Leftrightarrow \int_{\bR} \frac{g(x)}{x+u}dx=0, \,\forall u\in \bH^+
\Leftrightarrow g\in H^q(\bR) \]
Hence $fg\in H^1(\bR)$, so $\int_{\bR} fg=0$, which gives a contradiction. 

Now, for any distinct $u,w\in\bH^+$, observe that
\[ b_u(x)b_w(x)=\frac{b_u(x)-b_w(x)}{w-u}. \]
Define 
\[h_n=(ni-u)b_u b_{ni}=b_u-b_{ni}. \]
Since $h_n\rightarrow b_u$ pointwise, as $n\rightarrow \infty$, and $|h_n(x)|\leq |h_1(x)|$, for all $x\in\bR$,
it follows from dominated convergence that $h_n\stackrel{\|\cdot\|_p}{\rightarrow} b_u$.
Therefore, given $u\in\bH^+$, the function $b_u$ lies in the closed linear span of $\D_2$, so by the first part of the lemma, the proof is complete. 
\end{proof} 
\begin{lemma}\label{dense}
Let $\bH^+_\bQ=\{u\in \bH^+ : u=x+iy,\text{ where } x,y\in\bQ\}$. For every $t\in \bR$ the countable set
\[\Lambda_t=\{b_uD_tb_w|u,w\in \bH^+_\bQ\}\]
is dense in $H^p(\bR)$, for every $p\in(1,\infty)$.
\end{lemma}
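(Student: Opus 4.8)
The plan is to reduce the statement to the density of the span of $\D_2$ proved in the previous lemma. The first observation is that translation sends $b_w$ to a function of the same type: since $D_t b_w(x) = b_w(x-t) = \frac{1}{(x-t)+w} = b_{w-t}(x)$ and $w-t \in \bH^+$ whenever $w \in \bH^+$ (the real parameter $t$ only shifts the real part), every element of $\Lambda_t$ has the form $b_u D_t b_w = b_u b_{w-t}$, a pointwise product of two functions from $\D_1$. Thus $\Lambda_t = \{\, b_u b_v : u \in \bH^+_\bQ,\ v \in \bH^+_\bQ - t \,\}$, where $\bH^+_\bQ - t = \{z-t : z \in \bH^+_\bQ\}$.

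Next I would identify the two parameter ranges as dense in $\bH^+$. The set $\bH^+_\bQ$ is dense by definition, and although $\bH^+_\bQ - t$ need not be contained in $\bH^+_\bQ$ unless $t$ is rational, it is still dense: its real parts fill the coset $\bQ - t$, which is dense in $\bR$, while its imaginary parts fill the positive rationals, dense in $(0,\infty)$. Hence the pairs $(u, w-t)$ arising from $u,w \in \bH^+_\bQ$ run over a dense subset of $\bH^+ \times \bH^+$.

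The analytic core is the continuity of the map $(u,v) \mapsto b_u b_v$ from $\bH^+ \times \bH^+$ into $H^p(\bR)$. I would establish this through the splitting $b_u b_v - b_{u'} b_{v'} = (b_u - b_{u'}) b_v + b_{u'} (b_v - b_{v'})$ together with the identity $b_u(x) - b_{u'}(x) = \frac{u'-u}{(x+u)(x+u')}$. Each summand is then $|u-u'|$ (respectively $|v-v'|$) times a function of the form $\frac{1}{(x+\alpha)(x+\beta)(x+\gamma)}$ whose three denominators have imaginary parts bounded below by some $\delta > 0$ once the parameters are confined to a compact neighbourhood $K \subset \bH^+$. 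Such a function is bounded by $\delta^{-3}$ and decays like $|x|^{-3}$, so its $L^p$ norm is finite and uniformly bounded over $K$; consequently $\|b_u b_v - b_{u'} b_{v'}\|_p \to 0$ as $(u',v') \to (u,v)$. I expect this uniform integrability bound to be the only place demanding real work, though it is entirely elementary.

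Finally I would combine the pieces. Given a generator $b_{u_0} b_{w_0}$ of $\D_2$ with $u_0, w_0 \in \bH^+$ and any $\epsilon > 0$, the density of $\bH^+_\bQ$ and of $\bH^+_\bQ - t$ allows me to choose $u, w \in \bH^+_\bQ$ with $u$ near $u_0$ and $w-t$ near $w_0$, and the continuity above yields $\|b_u D_t b_w - b_{u_0} b_{w_0}\|_p < \epsilon$. Thus every element of $\D_2$ lies in the closed linear span of $\Lambda_t$, so this closed span contains $\overline{\operatorname{span}}\,\D_2 = H^p(\bR)$ by the previous lemma. Therefore the linear span of $\Lambda_t$ is dense in $H^p(\bR)$, as claimed.
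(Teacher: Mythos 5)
Your proposal is correct and takes essentially the same route as the paper: observe that $D_t b_w = b_{w-t}$, so each element of $\Lambda_t$ is a product $b_u b_{w-t}$ of functions from $\D_1$ with parameters ranging over dense subsets of $\bH^+$, and then reduce to the density of the linear span of $\D_2$ established in the previous lemma. The only difference is in the final approximation step, where the paper invokes dominated convergence while you prove an explicit $L^p$-norm continuity estimate for $(u,v)\mapsto b_u b_v$ on compact subsets of $\bH^+\times\bH^+$; both devices are routine and fill the same gap.
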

\begin{proof}
Observe first that
\[ D_tb_w(x)=\frac{1}{(x-t)+w}= \frac{1}{x+(w-t)}=b_{w-t}(x),\,x\in\bR.\]
Since $\bQ$ is dense in $\bR$, the rest of the proof is a simple application of dominated convergence.
\end{proof}
\medskip
The Beurling theorem in the $L^p$-setting will be useful for the determination of the lattice of $A_{par}^p$ (\cite{hel}, \cite{redd}). Applying the isometric isomorphism
\[\Phi_p: L^p(\bT)\rightarrow L^p(\bR): (\Phi_p f)(t)=\left(\frac{1}{\sqrt{\pi}(1-it)}\right)^{2/p}f\left(\frac{1+it}{1-it}\right),\,\,t\in\bR, \]
 we can state the theorem on the real line, instead of the unit circle. Note that $\Phi_p$ restricts to an isomorphism of $H_p$ -spaces \cite{koo}. 
\begin{thm}
 Given $p\in (1,\infty)$, let $\M$ be a closed subspace of $L^p(\bR)$ which is invariant under the semigroup $\{M_\lambda,\lambda\geq 0\}$. Then $\M$ is either of the form $L^p(E)$ for some Borel subset $E\subseteq \bR$ or $\M$ is equal to $\phi H^p(\bR)$ for some unimodular function $\phi$.
\end{thm}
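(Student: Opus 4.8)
The plan is to deduce the statement from the classical $L^p$-analogue of Beurling's theorem on the circle, transported to the line by the isometry $\Phi_p$. The bridge between the two settings is the observation that the coordinate function $z$ on $\bT$ pulls back under the Cayley substitution to the inner function $m(t)=\frac{1+it}{1-it}$ on $\bR$: since $\Phi_p$ is isometric, restricts to an isomorphism of the $H^p$-spaces, and satisfies $\Phi_p(\psi g)=(\psi\circ c)\,\Phi_p g$ with $c(t)=\frac{1+it}{1-it}$, it intertwines multiplication by $z$ on $L^p(\bT)$ with multiplication by $m$ on $L^p(\bR)$. Thus if I can show that $\M$ being invariant under $\{M_\lambda:\lambda\ge 0\}$ forces $\M$ to be invariant under $M_m$, then $\Phi_p^{-1}(\M)$ is a closed, shift-invariant subspace of $L^p(\bT)$, and the circle theorem applies directly.

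The crucial step is therefore to pass from the continuous one-parameter semigroup to the single multiplier $m$. For this I would use the resolvent-type identity
\[
 b_u(x)=\frac{1}{x+u}=-i\int_0^\infty e^{i\lambda u}\,e^{i\lambda x}\,\ud\lambda,\qquad u\in\bH^+,
\]
which exhibits the multiplication operator $M_{b_u}$ as the norm-convergent integral $-i\int_0^\infty e^{i\lambda u}M_\lambda\,\ud\lambda$ (the integrand has norm $e^{-\lambda\,\mathrm{Im}\,u}$). Approximating this integral by Riemann sums $\sum_j c_j M_{\lambda_j}$ with $\lambda_j\ge 0$ and using that $\M$ is closed, I get that $\M$ is invariant under $M_{b_u}$ for every $u\in\bH^+$. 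Since $m=-1+2i\,b_i$, invariance under $M_m$ follows at once. (The same computation shows, more conceptually, that the weak-$*$ closed algebra generated by $\{e^{i\lambda x}:\lambda\ge0\}$ is $H^\infty(\bR)$ and coincides with the one generated by $m$, so the two invariance hypotheses are in fact equivalent and $\M$ is precisely a closed $H^\infty(\bR)$-submodule.)

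With invariance under $M_m$ in hand, the circle theorem gives that $\Phi_p^{-1}(\M)$ is either $\chi_E L^p(\bT)$ for some Borel $E\subseteq\bT$ or $\psi H^p(\bT)$ for some unimodular $\psi$. Applying $\Phi_p$ and using the module identity above, the first alternative becomes $L^p(F)$ with $F=c^{-1}(E)$ Borel, and the second becomes $(\psi\circ c)H^p(\bR)$, where $\psi\circ c$ is unimodular on $\bR$; this is exactly the asserted dichotomy. I expect the main obstacle to be the first, continuous-to-discrete step: one must make the integral representation of $M_{b_u}$ and the attendant approximation rigorous on $L^p$, where the Fourier transform is not available as it is on $L^2$, and verify carefully that invariance is preserved in the limit. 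It is precisely this reduction that licenses replacing the one-parameter semigroup on $\bR$ by the single shift on $\bT$.
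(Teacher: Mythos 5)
Your proposal follows the same route the paper intends: the paper does not actually prove this theorem, but presents it as the known circle result (Helson \cite{hel}, Redett \cite{redd}) transported to the line by the Cayley isometry $\Phi_p$, whose $H^p$-preservation is quoted from \cite{koo}. What you add, and what the paper leaves entirely implicit, is the bridge from invariance under the continuous semigroup $\{M_\lambda:\lambda\geq 0\}$ to invariance under the single multiplier $m(t)=\frac{1+it}{1-it}$, which is exactly what one needs in order to quote the circle theorem for the shift; your resolvent identity together with $m=-1+2i\,b_i$ and the intertwining $\Phi_p M_z=M_m\Phi_p$ accomplish this, and the pull-back of the two alternatives ($\chi_E L^p(\bT)\mapsto L^p(c^{-1}(E))$, $\psi H^p(\bT)\mapsto (\psi\circ c)H^p(\bR)$) is handled correctly.

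One step is misstated, though the repair is immediate. The integral $-i\int_0^\infty e^{i\lambda u}M_\lambda\,\ud\lambda$ is \emph{not} norm-convergent, and its Riemann sums do not converge in operator norm: the multiplication semigroup is not norm-continuous, since $\|M_\lambda-M_\mu\|=\sup_x|e^{i(\lambda-\mu)x}-1|=2$ for $\lambda\neq\mu$, so $\lambda\mapsto e^{i\lambda u}M_\lambda$ has norm-discrete, non-separable range and is not even Bochner integrable as a $B(L^p(\bR))$-valued function; the pointwise norm bound $e^{-\lambda\,\mathrm{Im}\,u}$ does not rescue this. The integral must instead be taken in the strong sense: for each fixed $f\in\M$ the map $\lambda\mapsto e^{i\lambda u}M_\lambda f$ \emph{is} $L^p$-norm continuous and dominated by $e^{-\lambda\,\mathrm{Im}\,u}\|f\|_p$, so its Riemann sums over $[0,N]$ are finite combinations $\sum_j c_j M_{\lambda_j}f$ with $\lambda_j\geq 0$ (hence elements of $\M$), they converge in $L^p$ to $-i\int_0^N e^{i\lambda u}M_\lambda f\,\ud\lambda$, and a Fubini argument against $L^q$ test functions identifies the $N\to\infty$ limit with $b_u f$. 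Since $\M$ is closed, this gives $M_{b_u}\M\subseteq\M$, which is all your argument needs; with the limit interpreted this way the rest of the proof stands.
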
  
\medskip

\subsection{The space $L^p( \bR ; L^q(\bR))$, for $1< p,q<\infty $}
We now introduce some notation and terminology associated with the classical space $L^p( \bR ; L^q(\bR))$. This space is a space of kernel functions for what we refer to as the $(p,q)$-integrable operators. For more details, we refer the reader to \cite{pet},\cite{pis}.

Let $p,q\in[1,+\infty]$. Define $\S( \bR ; L^q(\bR))$ be the space of measurable simple functions; i.e. the functions $f: \bR\rightarrow L^q(\bR)$ taking only finitely many values :
\[ f(x)=\sum\limits_{k=1}^{n}\chi_{A_k}(x)g_k,
\]
where $\{A_k\}_{k=1,\dots,n}$ is a finite family of Borel measurable pairwise disjoint sets and where\\ 
$g_k\in L^q(\bR)$. 
\begin{definition}
A function $f: \bR\rightarrow L^q(\bR)$ is said to be strongly measurable if there is a sequence $(f_n)$ in $\S( \bR ; L^q(\bR))$, tending to $f$ pointwise. Also, $f$ is weakly measurable, if given $\omega\in (L^q(\bR))^\ast$ the function $t\mapsto\omega(f(t))$ is Borel measurable.   
\end{definition}

The relationship between strong and weak measurability is given by the following theorem of Pettis \cite{pet}, who introduced the notion of almost separably valued functions. 
\begin{definition}
Let $1\leq q\leq \infty$. A function $f: \bR \rightarrow  L^q(\bR)$ is  almost separably valued, if there is a conull Borel set $A\subseteq \bR$, such that $f(A)$ is separable.  
\end{definition}
\begin{thm}
A function $f: \bR \rightarrow L^q(\bR)$ is strongly measurable if and only if it is weakly measurable and almost separably valued.
\end{thm}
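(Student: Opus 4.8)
The plan is to prove the two implications separately, the forward direction being routine and the converse carrying the real content.

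First I would treat the implication that strong measurability yields both weak measurability and almost separable valuedness. Suppose $(f_n)$ is a sequence in $\S(\bR;L^q(\bR))$ converging pointwise to $f$. For weak measurability, fix $\omega\in(L^q(\bR))^\ast$; each $t\mapsto\omega(f_n(t))$ is a finitely-valued measurable scalar function, and $\omega(f_n(t))\to\omega(f(t))$ pointwise, so $t\mapsto\omega(f(t))$ is measurable as a pointwise limit. For separability, note that each $f_n$ takes only finitely many values, so the union $R$ of all their ranges is countable; since $f(t)=\lim_n f_n(t)$ for every $t$, the range of $f$ lies in the closure $\overline{R}$, which is separable. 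Thus $f$ is in fact separably valued.

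For the converse, the key point is that weak measurability can be upgraded to measurability of the scalar functions $t\mapsto\|f(t)-y\|$. After discarding a null set I may assume $f$ is separably valued, and I let $Y$ denote the closed linear span of its range, a separable subspace of $L^q(\bR)$. Choosing a countable dense subset $\{y_n\}$ of $Y$ and applying the Hahn--Banach theorem, I would select functionals $\omega_n\in(L^q(\bR))^\ast$ with $\|\omega_n\|=1$ and $\omega_n(y_n)=\|y_n\|$; a standard $\varepsilon$-approximation then shows $\|y\|=\sup_n|\omega_n(y)|$ for every $y\in Y$. Applying this to $f(t)-y_k$ gives
\[
\|f(t)-y_k\|=\sup_n|\omega_n(f(t))-\omega_n(y_k)|,
\]
a countable supremum of measurable functions (by weak measurability), hence measurable in $t$.

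Finally I would build explicit simple approximants. For each $N$, define $k_N(t)$ to be the least index $k\le N$ minimising $\|f(t)-y_k\|$ over $1\le k\le N$, and set $g_N(t)=y_{k_N(t)}$. The preimages of each value are Borel, by the measurability established above, so $g_N\in\S(\bR;L^q(\bR))$, and by construction $\|g_N(t)-f(t)\|=\min_{k\le N}\|f(t)-y_k\|$, which decreases to $0$ for every $t$ in the conull set since $\{y_k\}$ is dense in the range of $f$. Thus $g_N\to f$ pointwise almost everywhere and $f$ is strongly measurable. I expect the measurability of $t\mapsto\|f(t)-y\|$ to be the main obstacle: it is precisely here that both hypotheses are genuinely needed, weak measurability supplying the measurable scalar functions and separability supplying the countable norming family that converts the supremum defining the norm into a countable one.
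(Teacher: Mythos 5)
Your proof is correct, but there is nothing in the paper to compare it against: the paper states this result as Pettis's theorem and simply cites \cite{pet}, giving no proof of its own. What you have written is the classical proof of the Pettis measurability theorem. The forward direction (pointwise limits of simple functions are weakly measurable, and the countable set of values of the approximants gives separability of the range) is exactly the routine half. The substance, as you correctly identify, is the converse, and your route is the standard one: use Hahn--Banach on a countable dense sequence $(y_n)$ of the separable subspace $Y$ to produce a countable norming family $(\omega_n)$ with $\|y\|=\sup_n|\omega_n(y)|$ for $y\in Y$, deduce Borel measurability of $t\mapsto\|f(t)-y_k\|$ from weak measurability, and then build the nearest-point simple approximants $g_N$, which converge to $f$ pointwise on the conull set.

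Two small points of hygiene, neither of which is a genuine gap. First, the norming identity $\|y\|=\sup_n|\omega_n(y)|$ holds only for $y\in Y$, so the measurability of $t\mapsto\|f(t)-y_k\|$ and the definition of the index function $k_N$ should be carried out on the conull Borel set $A$ where $f$ takes values in $Y$ (setting $g_N=0$ off $A$, say); this costs nothing, since almost-everywhere convergence is what is needed. Second, the paper's literal definition of strong measurability demands pointwise convergence everywhere, under which your forward argument actually yields that $f$ is genuinely separably valued; the stated equivalence with \emph{almost} separably valued functions is therefore only correct under the usual convention that strong measurability means almost-everywhere convergence of simple functions. Your proof adopts that convention, which is the standard one and the one consistent with the paper's use of equivalence classes modulo null sets, so the discrepancy lies in the paper's informal definition rather than in your argument.
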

\begin{example} \label{ex1}
Define $f:\bR\rightarrow L^\infty(\bR)$ by $f(x)=\chi_{(-\infty,x]}$. Then $f$ is not almost separably valued, and hence not strongly measurable, since $\|f(x)-f(t)\|_\infty=1$ for $x\neq t$. However, for $q\in(1,\infty)$, the function $g:\bR\rightarrow L^q(\bR)$, given by $g(x)=\chi_{(-\infty,x]} f$, where $f\in L^q(\bR)$, is strongly measurable. To see this, note that $L^q(\bR)$ is separable and given $\omega\in L^p(\bR)$, where $p$ is the conjugate exponent of $q$, we have
\[
\omega(g(x))=\int_\bR \omega(y) \chi_{(-\infty,x]}(y) f(y) dy=\int_{-\infty}^x \omega(y) f(y) dy,
\]
which is measurable, being the limit of absolutely continuous functions.
\end{example}
 The definition of $L^p$ spaces of $L^q$-valued functions is analogous to the case of scalar valued functions. Define
  $L^p( \bR ; L^q(\bR))$ as the set of equivalence classes (modulo equality for almost every $x\in\bR$) of strongly measurable functions $f$ that satisfy 
 $\left(\int_{\bR} \|f(x)\|_q^p dx\right)^{1/p}<\infty$ for $1\leq p<\infty$, and $\text{esssup}\|f(\cdot)\|_q$ for $p=\infty$. Each of the above spaces endowed with the respective norm

\[
 \|f\|_{p,q}=\left(\int_{\bR} \|f(x)\|_q^p dx\right)^{1/p}\,, \text{ for } p\in[1,\infty), \] 
\[ 
 \|f\|_{\infty,q}=\,\text{esssup}\|f(\cdot)\|_q~, \textrm{ for } p=\infty,
\]
becomes a Banach space.
\begin{rem}
In the case $p=q=2$ we have the natural isomorphisms 
\[L^2(\bR; L^2(\bR))\cong  L^2(\bR)\otimes L^2(\bR) \cong L^2(\bR^2)\].
\end{rem}

For the rest of the subsection, the exponents $p,q$ lie on the open interval $(1,\infty)$. 
Given $f_1,f_2,\dots, f_n\in L^p(\bR)$ and $g_1,g_2,\dots, g_n\in L^q(\bR)$, define
\[
f: \bR\rightarrow L^q(\bR): f(x)\mapsto \sum\limits_{k=1}^{n}f_k(x)g_k.
\]

We denote this function by $ \sum\limits_{k=1}^{n}f_k\otimes g_k$ and we write $\F(\bR;L^q(\bR))$ for the subspace of $L^p( \bR ; L^q(\bR))$ formed by such functions. Finally, we write $\F(\bR;\S(\bR))$ for the set of functions \\$\sum\limits_{k=1}^{n}f_k\otimes\chi_{A_k}$, where $\{A_k\}_{k=1,\dots,n}$ is a partition of the real line.
\begin{prop} \label{dense1}
The following sets are dense in $L^p(\bR;L^q(\bR))$. 
\begin{enumerate}
\item $\S(\bR;L^q(\bR))\cap L^p(\bR;L^q(\bR))$; 
\item $\F(\bR;L^q(\bR))\cap L^p(\bR;L^q(\bR))$;
\item $\F(\bR;\S(\bR))\cap L^p(\bR;L^q(\bR))$.
\end{enumerate}
\end{prop}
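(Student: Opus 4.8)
The plan is to treat (1) as the substantive step and then deduce (2) and (3) from it by elementary reductions that exploit the tensor structure. Throughout I write $\mu$ for Lebesgue measure and use the estimate
\[
\Big\|\sum_k f_k\otimes g_k\Big\|_{p,q}\le \sum_k\|f_k\|_p\,\|g_k\|_q,
\]
which follows from Minkowski's inequality and shows in particular that $\F(\bR;L^q(\bR))\subseteq L^p(\bR;L^q(\bR))$ automatically; thus the only effect of the intersections with $L^p(\bR;L^q(\bR))$ in the statement is to discard functions whose value is supported, with a nonzero vector, on a set of infinite measure.

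For (1) the point is that the definition of strong measurability supplies only \emph{pointwise} approximants, whereas convergence in the $\|\cdot\|_{p,q}$ norm is required. First I would record the standard Bochner-type refinement. Given $f\in L^p(\bR;L^q(\bR))$, strong measurability gives that $f$ is almost separably valued, so one may choose a countable set $\{v_j\}$ dense in the essential range, \emph{with $0$ among the $v_j$}, and define $s_n(x)$ to be the point nearest to $f(x)$ among $v_1,\dots,v_n$ (breaking ties by least index). A routine check using measurability of $x\mapsto\|v_j-f(x)\|_q$ shows each $s_n$ lies in $\S(\bR;L^q(\bR))$ and that $s_n(x)\to f(x)$ for almost every $x$. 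Including $0$ among the $v_j$ forces $\|s_n(x)-f(x)\|_q\le\|f(x)\|_q$ for every $n$, hence $\|s_n(x)\|_q\le 2\|f(x)\|_q$. This uniform domination is the key point: $2^p\|f(\cdot)\|_q^p\in L^1(\bR)$ is then an integrable majorant, so the dominated convergence theorem yields $\|s_n-f\|_{p,q}\to 0$, and each $s_n$ already lies in $L^p(\bR;L^q(\bR))$.

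For (2) observe that any simple function $\sum_k\chi_{A_k}g_k$ lying in $L^p(\bR;L^q(\bR))$ may be assumed to have every $g_k\neq 0$, whence $\mu(A_k)<\infty$ and so $\chi_{A_k}\in L^p(\bR)$. Such a function is therefore already of the form $\sum_k f_k\otimes g_k$ with $f_k=\chi_{A_k}$, giving the inclusion $\S(\bR;L^q(\bR))\cap L^p(\bR;L^q(\bR))\subseteq\F(\bR;L^q(\bR))$; density in (2) is then immediate from (1). For (3) I would begin with an approximant $\sum_j\phi_j\otimes g_j$ supplied by (2) and approximate each vector $g_j\in L^q(\bR)$ in $\|\cdot\|_q$ by a scalar simple function $s_j=\sum_l c_{jl}\chi_{A_{jl}}$ supported on sets of finite measure. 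Passing to the common refinement of the finitely many families $\{A_{jl}\}$, together with the (infinite-measure) complement in $\bR$ of their union, produces a finite partition $\{B_k\}$ of $\bR$ on which each $s_j$ is constant, so that $\sum_j\phi_j\otimes s_j=\sum_k\psi_k\otimes\chi_{B_k}$ with $\psi_k\in L^p(\bR)$ and $\psi_k=0$ on the infinite-measure block. This exhibits the function as an element of $\F(\bR;\S(\bR))\cap L^p(\bR;L^q(\bR))$, and the error is controlled by $\big\|\sum_j\phi_j\otimes(g_j-s_j)\big\|_{p,q}\le\sum_j\|\phi_j\|_p\,\|g_j-s_j\|_q$, which can be made arbitrarily small.

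I expect the main obstacle to be the careful verification in (1) that the pointwise simple approximants can be chosen with the uniform bound $\|s_n(x)\|_q\le 2\|f(x)\|_q$, since it is precisely this domination that licenses the passage from pointwise to $\|\cdot\|_{p,q}$-norm convergence; once it is in place, (2) and (3) are purely organisational, relying only on the finiteness of the support of simple functions and the taking of common refinements.
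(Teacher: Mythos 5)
Your proof is correct, and it is considerably more self-contained than the paper's. For parts (1) and (2) the paper gives no argument at all, deferring to Pisier's course notes; you supply the missing content: the Bochner-type nearest-point construction with the domination $\|s_n(x)\|_q\le 2\|f(x)\|_q$ (which is precisely what licenses dominated convergence and hence the passage from pointwise to $\|\cdot\|_{p,q}$-convergence), and the observation that a vector-valued simple function of finite $\|\cdot\|_{p,q}$-norm automatically has finite-measure supports, so that it already lies in $\F(\bR;L^q(\bR))$ and (2) follows from (1) with no further approximation. For part (3) your argument agrees in substance with the paper's: the paper reduces, via (2) and linearity, to a single elementary tensor $f\otimes g$ and approximates $g$ in $L^q(\bR)$ by scalar simple functions, so that $\|f\otimes g_n-f\otimes g\|_{p,q}=\|f\|_p\|g_n-g\|_q\to 0$; you carry out the same approximation for a general finite sum of tensors, adding the common-refinement bookkeeping, with the zero coefficient on the infinite-measure block, needed to land exactly in $\F(\bR;\S(\bR))$ as the paper defines it (a partition of all of $\bR$) --- a point the paper glosses over, since its reduction to elementary tensors tacitly uses that $\F(\bR;\S(\bR))$ is closed under addition via exactly such refinements. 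One small precision in your part (1): for the bound $\|s_n(x)-f(x)\|_q\le\|f(x)\|_q$ to hold for \emph{every} $n$ you should put $0$ first in the enumeration, i.e.\ take $v_1=0$; with $0$ merely somewhere among the $v_j$ the bound is only valid once $n$ passes the index of $0$, which of course still suffices for the limit.
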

\begin{proof}

The argument for the density of the first two sets can be found in \cite{pis}. For the last set it suffices to prove that given $f\in L^p(\bR), g\in L^q(\bR)$, we can find a sequence $(f_n)$ of elements in  
$\F(\bR;\S(\bR))\cap L^p(\bR;L^q(\bR))$, that converges to $f\otimes g$ with respect to the $\|\cdot\|_{p,q}$ norm. By the classical theory of $L^q$ spaces, there is a sequence of simple functions 
\[g_n=\sum\limits_{k=1}^{n} a_n \chi_{A_n},\, a_n\in \bC,\]
such that $g_n\rightarrow g$ in $L^q(\bR)$. Then the functions $f\otimes g_n$ lie in $\F(\bR;\S(\bR))\cap L^p(\bR;L^q(\bR))$ and 
\[ \|f\otimes g_n - f\otimes g\|_{p,q}=\|f\|_p \|g_n-g\|_q\rightarrow 0. \] 
\end{proof}

The characterization of the dual space of $L^p(\bR;L^q(\bR))$ is again analogous
to the scalar valued case, after we take account of duality in the range space $L^q(\bR)$ (see \cite{pis}).
\begin{prop}\label{dual}
Let $p,q\in(1,\infty)$ be conjugate exponents. The dual space of $L^p(\bR;L^q(\bR))$ is isometrically isomorphic to $L^q(\bR;L^p(\bR))$.
\end{prop}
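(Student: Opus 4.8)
The plan is to exhibit an explicit pairing realising $L^q(\bR;L^p(\bR))$ as a subspace of the dual of $L^p(\bR;L^q(\bR))$, to prove the pairing is isometric, and then to show it is onto. Given $g\in L^q(\bR;L^p(\bR))$, note that for almost every $x$ the value $g(x)$ lies in $L^p(\bR)=(L^q(\bR))^\ast$, so for $f\in L^p(\bR;L^q(\bR))$ one may define
\[
\Phi_g(f)=\int_\bR \langle f(x),g(x)\rangle\,dx=\int_\bR\!\int_\bR f(x)(y)\,g(x)(y)\,dy\,dx,
\]
where $\langle\cdot,\cdot\rangle$ denotes the $L^q$–$L^p$ duality pairing in the range variable. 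Two applications of Hölder's inequality, first in $y$ and then in $x$, give $|\Phi_g(f)|\le \int_\bR\|f(x)\|_q\|g(x)\|_p\,dx\le\|f\|_{p,q}\|g\|_{q,p}$, so $\Phi_g$ is bounded with $\|\Phi_g\|\le\|g\|_{q,p}$; strong measurability of $x\mapsto\langle f(x),g(x)\rangle$ is routine from the Pettis theorem. Thus $g\mapsto\Phi_g$ is a norm-decreasing linear map into $(L^p(\bR;L^q(\bR)))^\ast$.

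Second I would show the map is isometric by constructing a near-optimal $f$ for each $g$. Write $a(x)=\|g(x)\|_p$, so that $a\in L^q(\bR)$ with $\|a\|_q=\|g\|_{q,p}$, and choose by scalar $L^q$–$L^p$ duality the nonnegative $b=a^{q-1}/\|a\|_q^{q-1}\in L^p(\bR)$, for which $\|b\|_p=1$ and $\int_\bR ab=\|a\|_q$. For each $x$ the identity $\|g(x)\|_p=\langle e_x,g(x)\rangle$ is realised by the unit vector $e_x=|g(x)|^{p-1}\sgn(g(x))\,\|g(x)\|_p^{-(p-1)}\in L^q(\bR)$ (using $(p-1)q=p$). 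Setting $f(x)=b(x)e_x$ yields $\|f(x)\|_q=|b(x)|$, hence $\|f\|_{p,q}=\|b\|_p=1$, while $\Phi_g(f)=\int_\bR b(x)\|g(x)\|_p\,dx=\|g\|_{q,p}$. The only point needing care is that $x\mapsto e_x$ is strongly measurable, which follows because the Nemytskii map $\phi\mapsto|\phi|^{p-1}\sgn\phi$ is continuous from $L^p(\bR)$ into $L^q(\bR)$ and $g$ is strongly measurable; hence $\|\Phi_g\|=\|g\|_{q,p}$.

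The main obstacle is surjectivity: every $\Phi\in(L^p(\bR;L^q(\bR)))^\ast$ must arise as some $\Phi_g$. Here I would argue through vector measures. For a Borel set $A\subseteq\bR$ of finite measure the map $h\mapsto\Phi(\chi_A\otimes h)$ is a bounded functional on $L^q(\bR)$, hence an element $G(A)\in(L^q(\bR))^\ast=L^p(\bR)$, and $G$ is a countably additive, Lebesgue-continuous $L^p(\bR)$-valued measure. Its total variation on a finite-measure set $B$ is finite: assembling near-optimal unit vectors over a partition of $B$ into a \emph{single} simple function $f$ with $\|f\|_{p,q}\le|B|^{1/p}$ and testing $\Phi$ against it bounds $\sum_i\|G(A_i)\|$ by $\|\Phi\|\,|B|^{1/p}$. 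Since $L^p(\bR)$ is reflexive for $1<p<\infty$, it has the Radon–Nikodym property, so $G$ admits a Bochner-integrable density $g:\bR\to L^p(\bR)$ with $G(A)=\int_A g\,dx$ (patched over an exhaustion of $\bR$ by finite-measure sets). Unwinding the definitions gives $\Phi(\chi_A\otimes h)=\Phi_g(\chi_A\otimes h)$, and the density of $\F(\bR;\S(\bR))$ in $L^p(\bR;L^q(\bR))$ (Proposition \ref{dense1}) upgrades this to $\Phi=\Phi_g$; that $g$ in fact lies in $L^q(\bR;L^p(\bR))$ with $\|g\|_{q,p}\le\|\Phi\|$ follows by combining the variation bound with the isometry of the second step. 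Equivalently, this final step is precisely the classical Bochner-space duality theorem, whose hypothesis, the Radon–Nikodym property of the range dual, is guaranteed here by reflexivity of $L^p(\bR)$; this is the route taken by the cited reference.
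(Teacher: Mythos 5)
Your proposal is correct, and it supplies what the paper leaves entirely to the literature: the paper's ``proof'' of Proposition \ref{dual} consists of the remark that the duality is analogous to the scalar-valued case once duality in the range space is accounted for, together with a citation of \cite{pis}, so there is no in-paper argument to compare step by step. What you have written is the classical proof of Bochner-space duality $(L^p(\mu;X))^\ast\cong L^q(\mu;X^\ast)$ under the hypothesis that $X^\ast$ has the Radon--Nikodym property: the H\"older pairing, the explicit norming functions, and the vector-measure argument $A\mapsto G(A)$ with the RNP of the reflexive space $L^p(\bR)$ supplying the density. This is exactly the theorem the paper's citation encapsulates, so in substance you and the paper invoke the same result; the difference is only that you prove it. Three points to tighten. (i) Since the pairing used here is bilinear rather than sesquilinear and the spaces are complex, the norming vector should carry a conjugate: $e_x=|g(x)|^{p-1}\,\overline{\sgn(g(x))}\,\|g(x)\|_p^{-(p-1)}$, so that $\int e_x\, g(x)\,dy=\|g(x)\|_p$. (ii) The linear span of the test functions $\chi_A\otimes h$ with $|A|<\infty$, $h\in L^q(\bR)$, is the class of simple functions $\S(\bR;L^q(\bR))\cap L^p(\bR;L^q(\bR))$, so the density statement you need is item (1) of Proposition \ref{dense1}, not the class $\F(\bR;\S(\bR))$ of item (3). (iii) The final assertion, that $g\in L^q(\bR;L^p(\bR))$ with $\|g\|_{q,p}\le\|\Phi\|$, does not follow from the variation bound alone; the clean argument truncates: set $E_n=\{x\in[-n,n]:\|g(x)\|_p\le n\}$, note that $g\chi_{E_n}\in L^q(\bR;L^p(\bR))$ trivially, check on the test functions (and extend by density) that $\Phi_{g\chi_{E_n}}(f)=\Phi(f\chi_{E_n})$, apply your isometry step to get $\|g\chi_{E_n}\|_{q,p}=\|\Phi_{g\chi_{E_n}}\|\le\|\Phi\|$, and let $n\to\infty$ by monotone convergence. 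None of these gaps affects the correctness of the overall structure.
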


\begin{lemma}\label{extension}
Given an operator $T\in B(L^q(\bR))$, there is a unique  bounded linear operator
\[\widetilde{T}: L^p(\bR;L^q(\bR))\rightarrow L^p(\bR; L^q(\bR))\]
such that given $f\otimes g \in \F(\bR;L^q(\bR))$
\[ \widetilde{T}(f\otimes g)= f\otimes Tg.\] 
Moreover, the map $T\mapsto \widetilde{T}$ is isometric. 
\end{lemma}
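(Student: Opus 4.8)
The plan is to define $\widetilde{T}$ pointwise in the range variable, setting $(\widetilde{T}f)(x)=T(f(x))$ for every $f\in L^p(\bR;L^q(\bR))$. With this prescription everything to be proved reduces to four checks: that $\widetilde{T}f$ is again strongly measurable (so that $\widetilde{T}$ really maps into $L^p(\bR;L^q(\bR))$), that $\widetilde{T}$ is bounded with $\|\widetilde{T}\|\le\|T\|$, that $\widetilde{T}$ has the prescribed action on $\F(\bR;L^q(\bR))$, and finally that the assignment $T\mapsto\widetilde{T}$ preserves norms.

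I expect the strong measurability of $\widetilde{T}f$ to be the main (and essentially the only) obstacle. To handle it I would argue directly from the definition of strong measurability: choose a sequence $(f_n)$ in $\S(\bR;L^q(\bR))$ converging to $f$ pointwise. Since $T$ sends a simple function $\sum_k\chi_{A_k}g_k$ to $\sum_k\chi_{A_k}Tg_k$, which is again simple, each $Tf_n$ lies in $\S(\bR;L^q(\bR))$; and by continuity of $T$ we have $Tf_n(x)\to Tf(x)$ in $L^q(\bR)$ for every $x$. Hence $x\mapsto T(f(x))$ is a pointwise limit of simple functions and so is strongly measurable. (Alternatively one could invoke the Pettis theorem, checking weak measurability through the adjoint $T^\ast$ together with almost separable valuedness, but the simple-function route is cleaner.)

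Boundedness then follows from the pointwise inequality $\|T(f(x))\|_q\le\|T\|\,\|f(x)\|_q$: raising to the $p$-th power and integrating gives
\[
\|\widetilde{T}f\|_{p,q}^p=\int_\bR\|T(f(x))\|_q^p\,dx\le\|T\|^p\int_\bR\|f(x)\|_q^p\,dx=\|T\|^p\,\|f\|_{p,q}^p,
\]
so that $\|\widetilde{T}\|\le\|T\|$. For a function $f\otimes g\in\F(\bR;L^q(\bR))$ one computes $(\widetilde{T}(f\otimes g))(x)=T(f(x)g)=f(x)\,Tg=(f\otimes Tg)(x)$, which is exactly the required identity. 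Uniqueness is now immediate: any bounded operator satisfying the displayed formula agrees with $\widetilde{T}$ on $\F(\bR;L^q(\bR))$, which is dense by Proposition~\ref{dense1}(2), and two bounded operators that coincide on a dense set are equal.

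For the isometry statement it remains only to establish the reverse inequality $\|\widetilde{T}\|\ge\|T\|$, the linearity of $T\mapsto\widetilde{T}$ being clear. I would test $\widetilde{T}$ on elementary tensors: fix $f\in L^p(\bR)$ with $\|f\|_p=1$ and let $g\in L^q(\bR)$ be arbitrary. Then $\|f\otimes g\|_{p,q}=\|g\|_q$ while $\|\widetilde{T}(f\otimes g)\|_{p,q}=\|f\otimes Tg\|_{p,q}=\|Tg\|_q$, whence $\|\widetilde{T}\|\ge\|Tg\|_q/\|g\|_q$ for every nonzero $g$. Taking the supremum over $g$ yields $\|\widetilde{T}\|\ge\|T\|$, and combining this with the upper bound gives $\|\widetilde{T}\|=\|T\|$, as required.
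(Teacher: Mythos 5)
Your proof is correct, but it constructs $\widetilde{T}$ by a genuinely different route from the paper. The paper never writes a pointwise formula for $\widetilde{T}$: it verifies the bound $\|\widetilde{T}f\|_{p,q}\le\|T\|\,\|f\|_{p,q}$ by an explicit computation on simple functions $\sum_k\chi_{A_k}\otimes g_k$ (using disjointness of the $A_k$ to pass the sum through the $q$-norm), and then obtains $\widetilde{T}$ on all of $L^p(\bR;L^q(\bR))$ abstractly, by continuous extension from the dense subspace $\S(\bR;L^q(\bR))\cap L^p(\bR;L^q(\bR))$ of Proposition \ref{dense1}. You instead define $(\widetilde{T}f)(x)=T(f(x))$ globally, which makes boundedness a one-line consequence of the pointwise inequality $\|T(f(x))\|_q\le\|T\|\,\|f(x)\|_q$ and makes uniqueness immediate; the price is that you must check that $x\mapsto T(f(x))$ is strongly measurable, which you do correctly: $T$ sends simple functions to simple functions and, being continuous, preserves pointwise limits, so strong measurability (in the paper's sense, as a pointwise limit of simple functions) is inherited. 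Your route buys an explicit formula for $\widetilde{T}$ on the whole space --- something the paper's construction yields only implicitly --- together with a shorter norm estimate, while the paper's route avoids any measurability discussion, since the abstract extension automatically lands in the Banach space. The isometry argument is essentially the same in both: you test on $f\otimes g$ with $\|f\|_p=1$ arbitrary where the paper uses the specific choice $f=\chi_{[0,1]}$; both give $\|\widetilde{T}\|\ge\|T\|$ and hence equality.
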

\begin{proof}
Let $f=\sum\limits_{k=1}^{n} \chi_{A_k}\otimes g_k$, such that $g_k\in L^q(\bR)$ and $\{A_k\}_{k=1,\dots,n}$ are pairwise disjoint Borel sets. By linearity, calculate 
\begin{align*}
 \|\widetilde{T}f\|_{p,q}^p &= \bigg\|\sum\limits_{k=1}^{n} \chi_{A_k}\otimes Tg_k\bigg\|_{p,q}^p=
 \int_{\bR} \bigg\|\sum\limits_{k=1}^{n}\chi_{A_k}(x)Tg_k\bigg\|_q^p dx=\\&=
\int_{\bR}\left(\int_{\bR}\bigg|\sum\limits_{k=1}^{n}\chi_{A_k}(x)Tg_k(y)\bigg|^q dy\right)^{p/q}dx=
\int_{\bR}\left(\int_{\bR}\sum\limits_{k=1}^{n}\chi_{A_k}(x)|Tg_k(y)|^q dy\right)^{p/q}dx=\\&=
\int_{\bR}\left(\sum\limits_{k=1}^{n}\chi_{A_k}(x)\|Tg_k\|_q^q\right)^{p/q}dx\leq
\int_{\bR}\left(\sum\limits_{k=1}^{n}\chi_{A_k}(x)\|T\|^q\|g_k\|_q^q\right)^{p/q}dx=\\&=
\|T\|^p \int_{\bR}\left(\sum\limits_{k=1}^{n}\chi_{A_k}(x)\|g_k\|_q^q\right)^{p/q}dx=
\|T\|^p \int_{\bR}\left(\sum\limits_{k=1}^{n}\chi_{A_k}(x)\int_{\bR}|g_k(y)|^q dy\right)^{p/q}dx=\\&=
\|T\|^p \int_{\bR}\left(\int_{\bR}\bigg|\sum\limits_{k=1}^{n}\chi_{A_k}(x)g_k(y)\bigg|^q dy\right)^{p/q}dx=
\|T\|^p\|f\|_{p,q}^p.
\end{align*}
Since the set $\S(\bR;L^q(\bR))$ is dense in $L^p(\bR;L^q(\bR))$, the operator $\widetilde{T}$ is bounded.
To show that the mapping $T\mapsto \widetilde{T}$ is isometric, check that given $g\in L^q(\bR)$
\begin{align*}
\|\chi_{[0,1]}\otimes f\|_{p,q}^p= \int_{\bR} \|\chi_{[0,1]}(x)g\|_q^p dx=\int_0^1 dx\,\|g\|_q^p=\|g\|_q^p
\end{align*}
This yields an upper bound for the norm of the operator $T$
\begin{align*}
\|Tg\|_q =\|\chi_{[0,1]}\otimes Tg\|_{p,q}^p=\|(\widetilde{T}\chi_{[0,1]}\otimes g)\|_{p,q}^p\leq \|\widetilde{T}\|\,\|\chi_{[0,1]}\otimes g\|_{p,q}^p = \|\widetilde{T}\|\, \|g\|_q,
\end{align*}
so the proof is complete.
\end{proof}

\begin{lemma}
Let $p,q\in(1,\infty)$. The linear map 
\[ \Theta : L^p(\bR;L^q(\bR))\rightarrow L^p(\bR;L^q(\bR)): \Theta (f)(x)(y)\mapsto f(x)(x-y) \]
 is a bijective isometry onto $L^p(\bR;L^q(\bR))$.
 \end{lemma}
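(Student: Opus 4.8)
The plan is to exploit that for each fixed $x$ the prescription $h \mapsto (y \mapsto h(x-y))$ is a linear isometry of $L^q(\bR)$, since the affine map $y \mapsto x-y$ preserves Lebesgue measure. Writing $R_x$ for this ``reflect and translate'' isometry, the stated map is just $\Theta(f)(x) = R_x(f(x))$, and the $L^q$-norm of each fibre is unchanged: $\|\Theta(f)(x)\|_q = \|f(x)\|_q$ for a.e.\ $x$, by the substitution $z = x-y$. Granting that $\Theta(f)$ is again strongly measurable, integrating the $p$-th power over $x$ yields $\|\Theta(f)\|_{p,q} = \|f\|_{p,q}$ at once, so $\Theta$ is isometric.

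The one genuine point to settle is that $\Theta(f)$ lands in $L^p(\bR;L^q(\bR))$, that is, strong measurability; this is the main obstacle, because $R_x$ varies with $x$ and so $\Theta$ is \emph{not} of the form $\widetilde{T}$ covered by Lemma \ref{extension}. I would first verify it on the dense subspace $\S(\bR;L^q(\bR)) \cap L^p(\bR;L^q(\bR))$ of Proposition \ref{dense1}. For $f = \sum_{k} \chi_{A_k}\otimes g_k$ one has $\Theta(f)(x) = \sum_k \chi_{A_k}(x)\, R_x g_k$, so it suffices to note that for fixed $g \in L^q(\bR)$ the map $x \mapsto R_x g$ is norm-continuous; this follows by writing $R_x g$ as the translate by $x$ of the reflection of $g$ and invoking the continuity of translation on $L^q(\bR)$ for $q<\infty$. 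Being continuous, $x \mapsto R_x g$ is strongly measurable, and multiplying by the scalar measurable functions $\chi_{A_k}$ and summing preserves strong measurability. (Alternatively one checks weak measurability together with an almost separable range and applies the Pettis theorem quoted above.) With the isometry already established on this subspace, $\Theta$ extends uniquely to a bounded isometry of the whole space, still denoted $\Theta$; approximating a general $f$ by simple functions $f_n \to f$ and passing to an a.e.-convergent subsequence shows that this extension is given by the stated formula $\Theta(f)(x)(y) = f(x)(x-y)$ a.e.

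Finally, for bijectivity I would observe that $\Theta$ is an involution. Indeed $R_x R_x = \mathrm{id}$, since $R_x(R_x h)(y) = (R_x h)(x-y) = h\big(x-(x-y)\big) = h(y)$, whence $\Theta\circ\Theta = \mathrm{id}$ on the dense subspace and hence everywhere by continuity. Consequently $\Theta$ is a surjective isometry with $\Theta^{-1} = \Theta$, which completes the proof. I expect the isometry identity and the involution to reduce to the routine substitution $z = x-y$, with essentially all of the work concentrated in the strong-measurability step.
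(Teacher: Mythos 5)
Your proof is correct, and while its skeleton matches the paper's (verify everything on a dense subspace, then use the involution $\Theta\circ\Theta=\mathrm{id}$ for bijectivity), the execution of the key steps is genuinely different. The paper works with the range-simple dense set $\F(\bR;\S(\bR))\cap L^p(\bR;L^q(\bR))$ of Proposition \ref{dense1}, writing $f(x)=\sum_k f_k(x)\chi_{A_k}$; it then establishes \emph{weak} measurability of $\Theta f$ by computing $\omega(\Theta f(x))=\sum_k f_k(x)(\omega\ast\chi_{A_k})(x)$ and invoking Young's inequality (with Pettis' theorem implicitly supplying strong measurability, since $L^q(\bR)$ is separable), and it obtains the isometry by a hands-on norm computation exploiting the disjointness of the sets $x-A_k$. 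You instead work with the domain-simple dense set $\S(\bR;L^q(\bR))\cap L^p(\bR;L^q(\bR))$, get \emph{strong} measurability directly from the norm-continuity of $x\mapsto R_xg=D_xJg$ (strong continuity of translation on $L^q$, $q<\infty$), and get the isometry for free from the observation that each fibre map $R_x$ is an isometry of $L^q(\bR)$ — a decomposition-free argument that, incidentally, avoids the paper's slightly careless interchange of the exponents $p$ and $q$ when splitting the norm of a disjointly supported sum. You also make explicit a point the paper leaves implicit: that the continuous extension from the dense subspace agrees a.e.\ with the defining formula, which you justify by passing to an a.e.-convergent subsequence; this is worth retaining, as it is exactly what guarantees that $\Theta f$ is (a.e.\ equal to) a strongly measurable function for arbitrary $f$. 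Your remark that $\Theta$ is not of the form $\widetilde{T}$ handled by Lemma \ref{extension} correctly identifies why a separate argument is needed at all.
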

 \begin{proof}
 It suffices again to consider $f\in \F(\bR;\S(\bR))\cap L^p(\bR;L^q(\bR))$.  Let $f(x)=\sum\limits_{k=1}^{n} f_k(x)\chi_{A_k}$ as before. First, in order to obtain that $\Theta  f$ is strongly measurable, it suffices to show that given $\omega\in L^q(\bR)$, the function
\begin{align*}
\omega(\Theta f(\cdot))=\bR\rightarrow \bC : x\mapsto \omega(\Theta f(x))=\int_{\bR} \omega(y)(\Theta f)(x)(y)dy
\end{align*}
is measurable. This is trivial to prove, since
\begin{align*}
\omega(\Theta f(x))&=\int_{\bR} \omega(y)\sum\limits_{k=1}^{n} f_k(x)\chi_{A_k}(x-y)dy=\\&=
\sum\limits_{k=1}^{n} f_k(x)\int_{\bR} \omega(y)\chi_{A_k}(x-y)dy=\sum\limits_{k=1}^{n} f_k(x)(\omega\ast\chi_{A_k})(x),
\end{align*}
  and applying Young's inequality, the function $\omega\ast\chi_{A_k}$ lies in $L^q(\bR)$. Now
 \begin{align*}
 \|\Theta  f\|_{p,q}^p&= \int_{\bR} \|\Theta (f)(x)\|_q^p dx=
 \int_{\bR} \bigg\|\sum\limits_{k=1}^{n}f_k(x) \chi_{A_k}(x-\cdot)\bigg\|_q^pdx=\\&=\int_{\bR}\sum\limits_{k=1}^{n}|f_k(x)|^p
\|\chi_{A_k}(x-\cdot)\|_q^p dx=
\int_{\bR}\sum\limits_{k=1}^{n}|f_k(x)|^p\|\chi_{A_k}\|_q^p dx=\|f\|_{p,q}^p
  \end{align*}
  Since $\Theta ^{-1}=\Theta $, the map is bijective.
  \end{proof}
\medskip 
\subsection{The Fourier binest algebra $\A_{FB}^p$}
In this subsection, we give the natural generalization of the Fourier binest algebra on  $L^p$ spaces. The Volterra nest  $\mathcal{N}_v^p$ is the continuous nest consisting of the subspaces $L^p([t,+\infty))$, for $t\in\bR$, together with the trivial subspaces $\{0\},L^p(\bR)$. The analytic nest $\N_a^p$ is defined to be the chain of subspaces 
\[
e^{i\lambda x}H^p(\bR), \quad \lambda \in \bR,
\] 
together with the trivial subspaces. These nests determine  the Volterra nest algebra $\A_v^p=Alg\N_v^p$ and the analytic nest algebra $\A_a^p=Alg\N_a^p$, both of which are reflexive operator algebras.

The Fourier binest is the subspace lattice
 \begin{align*}
 \L_{FB}^p=\N_v^p\cup\N_a^p
 \end{align*}
 and the Fourier binest algebra $\A_{FB}^p$ is the non-selfadjoint algebra $Alg \L_{FB}^p$ of operators which leave invariant each subspace of $\L_{FB}^p$. It is elementary to check that $\A_{FB}^p$ is a reflexive algebra, being the intersection of two reflexive algebras.
  
  Given $p\in(1,+\infty)$, let $J$ be the flip operator given by $(Jf)(x)=f(-x)$. Note that $J$ is the isometric operator that takes the Volterra nest to its counterpart \[(\N_v^p)^\perp = \{0\}\cup\{L^p(-\infty,t], t\in\bR\}\cup\{L^p(\bR)\}\] and the analytic nest to
\[(\N_a^p)^\perp= \{0\}\cup\{e^{-i\lambda x}\overline{H^p}(\bR), \lambda\in\bR\}\cup\{L^p(\bR)\}\]. Hence
 $J\A_{FB}^p J$ is the binest algebra generated by the lattice $J\L_{ FB}^p= (\N_v^p)^\perp \cup (\N_a^p)^\perp$.
Since the spaces $e^{i\lambda x}H^p(\bR)$ and $L^p[t,\infty)$ are naturally complemented and have trivial subspaces
it is straightforward to adjust the Hilbert space arguments (\cite{kat-pow-1})to see that $\A_{FB}^p$ is an antisymmetric operator algebra, meaning that $\A_{FB}^p\cap J\A_{FB}^pJ=\bC I$, and contains no non-zero finite rank operators.

\medskip
\subsection{The parabolic algebra} 
We first recall the definition of the strong operator topology (SOT).
Given a net $(T_i)_{i\in I}$ of bounded operators on a Banach space $X$, we say that $T_i\stackrel{SOT}{\rightarrow} T$, where $T\in B(X)$, if and only if $T_i x\rightarrow Tx$, for every $x\in X$. In other words, the SOT topology on $B(X)$ is defined as the topology of pointwise convergence on $X$.

The parabolic algebra $\A_{par}^p$ is defined as the SOT-closed operator algebra 
on $L^p(\bR)$ that is generated by the two isometric semigroups $\{M_\lambda, \lambda\geq 0\}$, $\{D_\mu,\,\mu\geq 0\}$. Since the generators of $\A_{par}^p$ leave the subspaces of the binest $\L_{FB}^p$ invariant, we have 
 $\A_{par}^p\subseteq\A_{FB}^p$. Katavolos and Power showed in \cite{kat-pow-1} that, in the case $p=2$, these two algebras are equal.
\medskip
\subsection{Integral Operators on $L^p(\bR)$}
Let $p\in(1,\infty)$ and $q$ be its conjugate exponent. 
Given $k\in L^p(\bR; L^q(\bR))$, the linear map
\[
(Intk\,f)(x)=\int_\bR k(x)(y)f(y)dy
\]
 defines a bounded operator on $L^p(\bR)$.  Indeed, given $f\in L^p(\bR)$, applying the H\"older inequality we obtain
 \begin{align*}
 \int_{\bR}\bigg| \int_{\bR} k(x)(y) f(y)dy\bigg|^p dx\leq \|k\|_{p,q}^p \|f\|_p^p
\end{align*}

We will refer to such an operator as $(p,q)$-integral operator and
denote the set of $(p,q)$- integral operators  by
\[
\G^p=\{ Intk\,:\, k\in L^p(\bR;L^q(\bR))\}
\]

\begin{rem}
\begin{enumerate}
\item The above calculation also proves that the norm $\|\cdot\|_{p,q}$ dominates the operator norm, i.e. given $(k_n)_{n\geq 1}, k\in 
L^p(\bR; L^q(\bR))$, such that $k_n\stackrel {\|\cdot\|_{p,q}}{\rightarrow} k$, then $Intk_n\rightarrow Intk$.
\item In the special case $p=2$, then $\G^2=\C_2$, where $\C_2$ is the ideal of the Hilbert- Schmidt operators on $L^2(\bR)$.
\end{enumerate}
\end{rem}

\begin{lemma}\label{ideal}
$\G^p$ is a right ideal in $B(L^p(\bR))$.
\end{lemma}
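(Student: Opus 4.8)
The plan is to establish two things: that $\G^p$ is a linear subspace of $B(L^p(\bR))$, and that it absorbs right multiplication by arbitrary bounded operators. Linearity is immediate, since $k\mapsto Int\,k$ is linear in the kernel, giving $Int\,k_1+Int\,k_2=Int(k_1+k_2)$ and $\lambda\,Int\,k=Int(\lambda k)$; as $L^p(\bR;L^q(\bR))$ is a vector space, these kernels remain admissible. So the substance is the ideal property.

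For the ideal property I would fix $Int\,k\in\G^p$ and $T\in B(L^p(\bR))$ and produce a kernel $k'\in L^p(\bR;L^q(\bR))$ with $(Int\,k)T=Int\,k'$. The computation that drives everything is
\[
((Int\,k)Tf)(x)=\int_\bR k(x)(y)(Tf)(y)\,dy=\int_\bR (T^\ast k(x))(y)\,f(y)\,dy,
\]
where $T^\ast\in B(L^q(\bR))$ is the Banach-space transpose of $T$, characterised by $\int_\bR (Th)\phi=\int_\bR h\,(T^\ast\phi)$ for $h\in L^p(\bR)$, $\phi\in L^q(\bR)$; recall that under the identification $(L^p(\bR))^\ast\cong L^q(\bR)$ the operator $T^\ast$ is bounded with $\|T^\ast\|=\|T\|$. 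This identifies the candidate kernel as the pointwise composition $k'(x)=T^\ast(k(x))$.

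It then remains to check that $k'\in L^p(\bR;L^q(\bR))$, and this is exactly where Lemma \ref{extension} does the work. Applying that lemma to $T^\ast\in B(L^q(\bR))$ yields the extension $\widetilde{T^\ast}\in B(L^p(\bR;L^q(\bR)))$ with $\|\widetilde{T^\ast}\|=\|T^\ast\|=\|T\|$. I would argue that $\widetilde{T^\ast}(k)=k'$ as elements of $L^p(\bR;L^q(\bR))$: on a simple kernel $k=\sum_{j}\chi_{A_j}\otimes g_j$ this is an exact pointwise identity, since
\[
\widetilde{T^\ast}(k)(x)=\sum_j\chi_{A_j}(x)\,T^\ast g_j=T^\ast(k(x)).
\]
Once the identification is secured for general $k$, we obtain $\|k'\|_{p,q}\le\|T\|\,\|k\|_{p,q}<\infty$, whence $(Int\,k)T=Int\,k'\in\G^p$.

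The main obstacle is the passage from this simple-function identity to arbitrary $k$, that is, confirming that the abstractly defined $\widetilde{T^\ast}$ really acts as the pointwise map $k(x)\mapsto T^\ast(k(x))$ almost everywhere. I would handle this using the density of $\S(\bR;L^q(\bR))$ from Proposition \ref{dense1}(1): pick simple kernels $k_n\to k$ in $\|\cdot\|_{p,q}$, so $\widetilde{T^\ast}(k_n)\to\widetilde{T^\ast}(k)$ in norm. Since convergence in the mixed norm means $\|k_n(\cdot)-k(\cdot)\|_q\to 0$ in $L^p(\bR)$, a subsequence satisfies $k_n(x)\to k(x)$ in $L^q(\bR)$ for a.e.\ $x$, and likewise $\widetilde{T^\ast}(k_n)(x)\to\widetilde{T^\ast}(k)(x)$ for a.e.\ $x$ along a further subsequence. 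Combining $\widetilde{T^\ast}(k_n)(x)=T^\ast(k_n(x))$ with the continuity of $T^\ast$ forces $\widetilde{T^\ast}(k)(x)=T^\ast(k(x))=k'(x)$ a.e., which completes the argument.
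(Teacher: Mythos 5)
Your proof is correct, and it rests on the same two ingredients as the paper's: the Banach-space transpose $T^\ast\in B(L^q(\bR))$ and the extension operator $\widetilde{T^\ast}$ of Lemma \ref{extension}. The difference lies in the limiting mechanism. The paper never identifies the kernel of $(Int\,k)T$ pointwise: it proves $(Int\,k_m)T=Int\,\widetilde{k_m}$ with $\widetilde{k_m}=\sum_\kappa f_\kappa^{(m)}\otimes T^\ast g_\kappa^{(m)}$ for finite-rank kernels $k_m\to k$ (density (2) of Proposition \ref{dense1}), uses $\widetilde{T^\ast}$ only to see that $(\widetilde{k_m})$ is Cauchy in $\|\cdot\|_{p,q}$, and then concludes from uniqueness of operator-norm limits, since $\|\cdot\|_{p,q}$ dominates the operator norm on both sides of the identity. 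You instead work at the level of the kernels themselves: the duality computation identifies the candidate kernel explicitly as $k'(x)=T^\ast(k(x))$, and your a.e.-subsequence argument (applied to simple kernels, density (1) of Proposition \ref{dense1}) shows that $k'$ coincides with $\widetilde{T^\ast}(k)$ as an element of $L^p(\bR;L^q(\bR))$, after which the ideal property is immediate. The subsequence argument is sound: mixed-norm convergence gives $\|k_n(x)-k(x)\|_q\to 0$ in $L^p(\bR)$, hence a.e.\ convergence along a subsequence, and continuity of $T^\ast$ closes the loop. Your route buys slightly more than the paper's, namely the explicit statement that the abstract extension $\widetilde{T^\ast}$ acts by pointwise composition $k\mapsto T^\ast\circ k$, which the paper leaves implicit; the paper's route avoids all a.e.\ pointwise reasoning by staying at the operator level. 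One bookkeeping remark for your version: the simple kernels in $\S(\bR;L^q(\bR))\cap L^p(\bR;L^q(\bR))$ automatically have constituent sets of finite measure (otherwise the mixed norm is infinite), so each $\chi_{A_j}$ lies in $L^p(\bR)$ and the defining property $\widetilde{T^\ast}(\chi_{A_j}\otimes g_j)=\chi_{A_j}\otimes T^\ast g_j$ from Lemma \ref{extension} genuinely applies to them; this deserves a sentence but is not a gap.
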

\begin{proof}
Let $T\in B(L^p(\bR))$. Given $f\in L^p(\bR)$ and $k\in L^p(\bR;L^q(\bR))$, such that \\ $k=\sum\limits_{\kappa=1}^{n} f_{\kappa}\otimes g_{\kappa}$, we have
\begin{align*}
 (Intk T f)(x)=\int_{\bR}k(x,y) (Tf)(y)dy&=
\sum\limits_{\kappa=1}^{n} f_{\kappa}(x)\int_{\bR}g_{\kappa}(y) (Tf)(y)dy=\\&=
\sum\limits_{\kappa=1}^{n} f_{\kappa}(x)\int_{\bR}T^\ast g_{\kappa}(y) f(y)dy.
\end{align*}
where $T^\ast$ is the adjoint operator of $T$. Therefore $Intk T=Int \widetilde{k}$, where $\widetilde{k}= \sum\limits_{\kappa=1}^{n} f_{\kappa}\otimes T^\ast g_{\kappa}$.
In the general case, let $k\in L^p(\bR;L^q(\bR))$ and $k_m=\sum\limits_{\kappa=1}^{n} f_{\kappa}^{(m)}\otimes g_{\kappa}^{(m)}$, such that $k_m\stackrel{\|\cdot\|_{p,q}}{\rightarrow} k$. Applying the above argument, we have $Intk_m T= Int\widetilde{k_m}$, where
$\widetilde{k_m}= \sum\limits_{\kappa=1}^{n} f_{\kappa}^{(m)}\otimes T^\ast g_{\kappa}^{(m)}$. Then,  
by  Lemma \ref{extension}, there is a unique operator $\widetilde{T^\ast}\in B(L^p(\bR;L^q(\bR))$, such that 
 \begin{align*}
 \|\widetilde{k_m}-\widetilde{k_l}\|_{p,q}&=\bigg\|\sum\limits_{\kappa=1}^{n} f_{\kappa}^{(m)}\otimes T^\ast g_{\kappa}^{(m)}-\sum\limits_{\kappa=1}^{n} f_{\kappa}^{(l)}\otimes T^\ast g_{\kappa}^{(l)}\bigg\|_{p,q}=\\&=
 \bigg\|\widetilde{T^\ast}\left(\sum\limits_{\kappa=1}^{n} f_{\kappa}^{(m)}\otimes  g_{\kappa}^{(m)}-\sum\limits_{\kappa=1}^{n} f_{\kappa}^{(l)}\otimes  g_{\kappa}^{(l)}\right)\bigg\|_{p,q}\leq\\&
 \leq \big\|\widetilde{T^\ast}\big\|\,\bigg\|\left(\sum\limits_{\kappa=1}^{n} f_{\kappa}^{(m)}\otimes  g_{\kappa}^{(m)}-\sum\limits_{\kappa=1}^{n} f_{\kappa}^{(l)}\otimes  g_{\kappa}^{(l)}\right)\bigg\|_{p,q}=\\&
  =\big\|\widetilde{T^\ast}\big\|\, \|k_m-k_l\|_{p,q}
\end{align*}
It follows that the sequence $(\widetilde{k_m})_m$ is a Cauchy sequence, so by the completeness of $L^p(\bR;L^q(\bR))$, it converges to some $\widetilde{k}\in L^p(\bR;L^q(\bR))$. Since the $\|\cdot\|_{p,q}$ norm dominates the operator norm, the sequence $(Int\widetilde{k_n})_n$ of (p,q)-integral operators converges to $Int\widetilde{k}$. Thus, by the uniqueness of the limit, we obtain $Intk T=Int\widetilde{k}$.
\end{proof}
\medskip

\section{Reflexivity}
In this section, we prove that the parabolic algebra $\A_{par}^p$ is reflexive, given $p\in(1,\infty)$. In particular, we will show that $\A_{par}^p=\A_{FB}^p$. As we noted in the previous section, it suffices to prove that $\A_{FB}^p\subseteq \A_{par}^p$.
\begin{prop}\label{prop1}
Let $Intk\in \G^p\cap \A_{FB}^p$. Then  $k$ satisfies the following properties 
\begin{enumerate}
\item $\Theta (k)\in L^p(\bR;L^q(\bR_+))$
\item For every Borel set $A$ of finite measure and $h\in H^q(\bR)$, we have
\[ \int_{\bR}\left(\int_{\bR} \Theta (k)(x)(y)h(x)\chi_A(y)dy\right)dx=0. \] 
\end{enumerate}
\end{prop}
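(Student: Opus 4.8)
The plan is to read off the two properties from the two nests separately: since $\A_{FB}^p=\mathrm{Alg}(\N_v^p\cup\N_a^p)$, the hypothesis $Intk\in\A_{FB}^p$ forces $Intk$ to preserve every member of the Volterra nest $\N_v^p$ and of the analytic nest $\N_a^p$. Property (1) will come from the former and property (2) from the latter, the bridge in both cases being the duality $\langle f,g\rangle=\int_\bR fg$ together with the annihilator relation $(H^p(\bR))^\perp=H^q(\bR)$ established in the proof of the first density lemma.

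For (1) I would fix $t\in\bR$ and test invariance of $L^p([t,\infty))$. For $f\in L^p(\bR)$ supported in $[t,\infty)$, the function $Intk\,f$ must again be supported in $[t,\infty)$, so $\int_{[t,\infty)}k(x)(y)f(y)\,dy=0$ for a.e. $x<t$. Letting $f$ range over a dense family of such functions and translating through the kernel identification of $L^p(\bR;L^q(\bR))$ yields $k(x)(y)=0$ for a.e. $(x,y)$ with $x<t\le y$; intersecting over rational $t$ gives $k(x)(y)=0$ a.e. on $\{x<y\}$. Because $\Theta(k)(x)(y)=k(x)(x-y)$ vanishes exactly when its argument $x-y$ exceeds $x$, i.e. when $y<0$, this says precisely that $\Theta(k)\in L^p(\bR;L^q(\bR_+))$. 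The only real work here is book-keeping inside the vector-valued $L^p(\bR;L^q(\bR))$ formalism, which is routine.

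For (2) I would use the whole analytic nest, not merely $H^p(\bR)$. Invariance of $e^{i\lambda x}H^p(\bR)$ for every $\lambda\in\bR$ says that $M_{-\lambda}(Intk)M_\lambda$, whose kernel is $e^{-i\lambda(x-y)}k(x)(y)$, preserves $H^p(\bR)$; pairing against $h\in H^q(\bR)=(H^p(\bR))^\perp$ and testing on $g=b_w$, $w\in\bH^+$, gives
\[
\int_\bR\int_\bR k(x)(y)\,e^{i\lambda(y-x)}b_w(y)h(x)\,dy\,dx=0\qquad(\lambda\in\bR).
\]
Setting $u=x-y$ and integrating out $x$, the left side becomes $\int_\bR e^{-i\lambda u}W_w(u)\,du$, where $W_w(u)=\int_\bR k(x)(x-u)b_w(x-u)h(x)\,dx$ lies in $L^1(\bR)$ by H\"older; since its Fourier transform vanishes identically, Fourier uniqueness forces $W_w\equiv 0$. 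Substituting $s=x-u$ rewrites this as $\int_\bR \frac{R_u(s)}{s+w}\,ds=0$ for a.e. $u$, where $R_u(s)=k(s+u)(s)\,h(s+u)$; a finite-measure Fubini estimate shows $R_u\in L^1(\bR)$ for a.e. $u$, and ranging $w$ over the countable set $\bH^+_\bQ$ of Lemma \ref{dense} (then extending to all of $\bH^+$ by dominated convergence) fixes a common null set. Finally, putting $w=iN$, multiplying by $iN$ and letting $N\to\infty$ (dominated convergence, as $|iN/(s+iN)|\le 1$) gives $\int_\bR R_u(s)\,ds=0$, which unwinds to $\int_\bR \Theta(k)(x)(u)h(x)\,dx=0$ for a.e. $u$; integrating this in $u$ over a Borel set $A$ of finite measure is exactly the identity in (2).

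The step I expect to be the main obstacle is the localisation in (2): passing from the family of integrated identities (one per $\lambda$) to a pointwise-in-$u$ statement about the kernel slices. Concretely, verifying $W_w\in L^1(\bR)$ and its vanishing via Fourier uniqueness, arranging the a.e.-in-$u$ conclusions on a common null set as $w$ varies, and controlling the integrability of $R_u$ through the finite-measure set $A$ are where the measure-theoretic care concentrates; everything else reduces to H\"older estimates, the translation-invariance of the Hardy spaces, and the duality $H^q(\bR)=(H^p(\bR))^\perp$.
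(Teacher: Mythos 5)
Your argument is correct, and while your part (1) coincides with the paper's (Volterra-nest invariance forces $k(x)(y)=0$ a.e.\ on $\{y>x\}$, which is exactly $\Theta(k)\in L^p(\bR;L^q(\bR_+))$), your part (2) takes a genuinely different route. The paper uses that $D_\mu M_\lambda H^p(\bR)\subseteq M_\lambda H^p(\bR)$ to pair $Intk\,D_\mu M_\lambda f$ against $M_{-\lambda}g$ for $f\in H^p(\bR)$, $g\in H^q(\bR)$, integrates over the translation parameter $\mu$ against $\chi_A(-\mu)\,d\mu$, and reads the result as the Fourier transform in $\mu$, at frequency $\lambda$, of an $L^1$ function; Fourier uniqueness kills that function a.e., and then Lemma \ref{dense} --- density in $H^q(\bR)$ of the products $b_uD_\mu b_w$, $u,w\in\bH^+_\bQ$, which is precisely why that lemma is stated with the translation $D_t$ built in --- together with the freedom to translate the set $A$ recovers arbitrary $h\in H^q(\bR)$ and arbitrary $A$. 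You never touch the translation semigroup: you conjugate by modulations only, take the Fourier transform in the kernel variable $u=x-y$ rather than in $\mu$, fix an arbitrary $h\in H^q(\bR)$ from the outset (so your null sets in $u$ depend on $h$, which is harmless), and replace the density-of-products lemma by the elementary normalization limit $iNb_{iN}\to 1$ applied to the vanishing resolvent pairings $\int_\bR R_u(s)(s+w)^{-1}\,ds=0$, $w\in\bH^+_\bQ$. This buys a cleaner endgame: you reach the pointwise statement $\int_\bR\Theta(k)(x)(y)h(x)\,dx=0$ for a.e.\ $y$, and the proposition's identity then follows for every finite-measure $A$ by a single Fubini, so the paper's bookkeeping of ``a.e.\ in $\mu$ versus every $A$'' disappears. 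The steps you flagged as obstacles do all check out: $\|W_w\|_1\le\|b_w\|_p\,\|h\|_q\,\|k\|_{p,q}$ by two applications of H\"older (which also licenses the Fubini interchanges); the finite-measure Tonelli estimate $\int_B\int_\bR|\Theta(k)(x)(u)h(x)|\,dx\,du\le|B|^{1/p}\|h\|_q\|k\|_{p,q}$ (using that $\Theta$ is isometric) gives $R_u\in L^1(\bR)$ for a.e.\ $u$; and since $iN\in\bH^+_\bQ$, your dominated-convergence extension from $\bH^+_\bQ$ to all of $\bH^+$ is not even needed.
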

\begin{proof}
Let $Intk\in \G^p\cap \A_{FB}^p$.
\begin{enumerate}
\item Since $Intk L^p[t,\infty)\subseteq L^p[t,\infty)$, for every $t\in\bR$, it follows that $k(x)(y)=0$, for almost every $(x,y)\in \bR^2$, such that $y>x$. Therefore, $\Theta (k)(x)\in L^q(\bR_+)$ for almost every $x\in\bR$.
\item Since $Intk D_\mu M_\lambda H^p(\bR)\subseteq M_\lambda H^p(\bR)$, for every $\lambda,\mu\in\bR$, given functions 
\\$f\in H^p(\bR), g\in H^q(\bR)$, we have 
\[(Intk D_\mu M_\lambda f)( M_{-\lambda}g)\in H^1(\bR)\]
so
\[\int_{\bR} (Intk D_{\mu}M_\lambda f)(x) (M_{-\lambda}g)(x)dx=0.\]
Therefore, for every $q\in L^1(\bR)$, we obtain 
\[\int_{\bR}\left(\int_{\bR} (Intk D_{\mu}M_\lambda f)(x) (M_{-\lambda}g)(x)dx\right)q(\mu)d\mu=0.\]
Take $q(\mu)=\chi_A(-\mu)$, where $A$ is a Borel set of finite measure.
Then, by Fubini's theorem 
\begin{align*}
&\int_{\bR}\left(\int_{\bR}\left(\int_{\bR} k(x)(y) e^{i\lambda(y-\mu)}f(y-\mu)e^{-i\lambda x} g(x)dy\right)dx\right)\chi_A(-\mu)d\mu=0
\Leftrightarrow\\
&\int_{\bR}\left(\int_{\bR}\left(\int_{\bR}  e^{i\lambda(y-\mu)}f(y-\mu)\chi_A(-\mu)d\mu\right)k(x)(y)e^{-i\lambda x} g(x) dy\right)dx=0
\stackrel{\mu\rightarrow \mu+y-x}{\Leftrightarrow}\\&
\int_{\bR}\left(\int_{\bR}\left(\int_{\bR}  e^{i\lambda(x-\mu)}f(x-\mu)\chi_A(x-y-\mu)d\mu\right)k(x)(y)e^{-i\lambda x} g(x) dy\right)dx=0
\Leftrightarrow\\&
\int_{\bR}\left(\int_{\bR}\left(\int_{\bR} k(x)(y) D_\mu f(x)g(x)D_\mu\chi_A(x-y)dy\right) dx\right)e^{-i\lambda\mu}d\mu=0 
\end{align*}

We claim that the function
\[
\Phi:\bR\rightarrow\bC: \mu\mapsto \int_{\bR}\left(\int_{\bR} k(x)(y) D_\mu f(x)g(x)D_\mu\chi_A(x-y)dy\right) dx
\]
is a well defined $L^1$ function. By Tonelli's theorem, it suffices to show that 
\[ \int_{\bR}\left(\int_{\bR}\left(\int_{\bR} \big|k(x)(y) D_\mu f(x)g(x)D_\mu\chi_A(x-y)\big|d\mu\right) dy\right) dx <\infty\].
We have
\begin{align*}
&\int_{\bR}\left(\int_{\bR}\left(\int_{\bR} \big|k(x)(y) D_\mu f(x)g(x)D_\mu\chi_A(x-y)\big|d\mu\right) dy\right) dx =\\&=
\int_{\bR}\left(\int_{\bR}\left(\big|k(x)(y)\big|\int_{\bR} \big| f(x-\mu)\chi_A(x-y-\mu)\big|d\mu\right) dy\right) \big|g(x)\big|dx=\\&=
\int_{\bR}\left(\int_{\bR}\left(\big|k(x)(y)\big|\int_{\bR} \big| f(\mu)\chi_A(\mu-y)\big|d\mu\right) dy\right) \big|g(x)\big|dx=\\&=
\int_{\bR}\left(\int_{\bR}\left(\big|k(x)(y)\big|\int_{\bR} \big| f(\mu)J\chi_A(y-\mu)\big|d\mu\right) dy\right) \big|g(x)\big|dx,
\end{align*}
where $J$ is again the flip operator. By Young's inequality the function $h:=|f|\ast J\chi_A$ lies in $L^p(\bR)$, so the expression above is equal to 
\[
\int_{\bR}\left(\int_{\bR}\big|k(x)(y)h(y)\big| dy\right) \big|g(x)\big|dx 
\]
which by Holder's inequality is bounded by $\|h\|_p\|k\|_{p,q}\|g\|_q$, so our claim is proven. 
Since the Fourier transform of the function $\Phi$ is the zero function, it follows that for almost every $\mu\in \bR$
\[
\int_{\bR}\left(\int_{\bR} k(x)(y) D_\mu f(x)g(x)D_\mu\chi_A(x-y)dy\right) dx=0
\]
Hence by Lemma \ref{dense}
\[
\int_{\bR}\left(\int_{\bR} k(x)(y) h(x)D_\mu\chi_A(x-y)dy\right) dx=0
\]
for every $h\in H^q(\bR)$. Moreover, since the Borel set $A$ was freely chosen and $D_\mu\chi_A=\chi_{A+\mu}$,
where $A+\mu=\{x+\mu:x\in A\}$, we have the equivalence  
\[
\int_{\bR}\left(\int_{\bR} k(x)(y) h(x)\chi_A(x-y)dy\right) dx=0\\
\stackrel{y\rightarrow x-y} {\Leftrightarrow} \int_{\bR}\left(\int_{\bR} \Theta (k)(x)(y) h(x)\chi_A(y)dy\right) dx=0
\]
\end{enumerate}
\end{proof}
Our next goal is to determine a dense set of $\G^p\cap\A_{FB}^p$. We start with an approximation lemma.
\begin{lemma} 
Let $\phi\in L^1(\bR)$. Then, given $p\in[1,\infty)$, the convolution operator 
\[\Delta_\phi: L^p(\bR)\rightarrow L^p(\bR):f\mapsto \phi\ast f,\]
is bounded. Furthermore, if $\phi$ has essential support in $\bR_+$, then $\Delta_\phi$ belongs to the SOT-closed algebra generated by $\{D_t\,|\, t\in \bR_+\}$.
\end{lemma}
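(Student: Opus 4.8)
The plan is to handle the two assertions in turn: the boundedness is immediate from Young's inequality, while for the membership in the SOT-closed algebra I would realise $\Delta_\phi$ as an SOT-limit of Riemann sums assembled from the translations $D_t$.

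For the first assertion I would write, for $f\in L^p(\bR)$,
\[
(\phi\ast f)(x)=\int_\bR \phi(t)\,(D_t f)(x)\,dt,
\]
and apply Minkowski's integral inequality together with the fact that each $D_t$ is an isometry of $L^p(\bR)$:
\[
\|\Delta_\phi f\|_p\leq \int_\bR |\phi(t)|\,\|D_t f\|_p\,dt=\|\phi\|_1\,\|f\|_p.
\]
Thus $\Delta_\phi$ is bounded, with $\|\Delta_\phi\|\leq\|\phi\|_1$. This contractivity estimate is not just a byproduct but the key reduction device for the second part: the assignment $\phi\mapsto\Delta_\phi$ is contractive from $L^1(\bR)$ into $B(L^p(\bR))$, so if $\phi_n\to\phi$ in $L^1(\bR)$ then $\Delta_{\phi_n}\to\Delta_\phi$ in operator norm, hence in SOT. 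Since the algebra generated by $\{D_t:t\in\bR_+\}$ is SOT-closed, it suffices to prove the membership for $\phi$ ranging over a dense subset of those $L^1$ functions with essential support in $\bR_+$; I would take $\phi$ continuous with compact support $\operatorname{supp}\phi\subseteq[a,b]\subseteq(0,\infty)$.

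For such a $\phi$ I would approximate the integral $\Delta_\phi f=\int_a^b \phi(t)\,D_t f\,dt$ by Riemann sums. For a partition $a=t_0<t_1<\dots<t_N=b$ of mesh $\delta$, set
\[
S_\delta=\sum_{k=1}^N \phi(t_k)\,(t_k-t_{k-1})\,D_{t_k}.
\]
Each $S_\delta$ is a finite linear combination of operators $D_{t_k}$ with $t_k\geq 0$, so it lies in the algebra generated by $\{D_t:t\in\bR_+\}$. Writing the difference as a sum of integrals over the subintervals and applying Minkowski's integral inequality exactly as above yields
\[
\|S_\delta f-\Delta_\phi f\|_p\leq \sum_{k=1}^N\int_{t_{k-1}}^{t_k}\big\|\phi(t_k)\,D_{t_k}f-\phi(t)\,D_t f\big\|_p\,dt,
\]
and the integrand is dominated by $|\phi(t_k)-\phi(t)|\,\|f\|_p+|\phi(t)|\,\|D_{t_k}f-D_t f\|_p$.

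The only genuine obstacle is the control of the second term, which rests on the \emph{strong continuity of the translation group}: the map $t\mapsto D_t f$ is continuous from $\bR$ into $L^p(\bR)$, hence uniformly continuous on the compact interval $[a,b]$. Combined with the uniform continuity of $\phi$ on $[a,b]$, this forces the displayed bound to tend to $0$ as $\delta\to 0$, with a rate depending only on $f$ through its modulus of continuity under translation. Consequently $S_\delta f\to\Delta_\phi f$ in $L^p(\bR)$ for each $f$; for any finite collection of vectors a sufficiently fine partition yields a single $S_\delta$ in the algebra approximating $\Delta_\phi$ on all of them, which places $\Delta_\phi$ in the SOT-closure. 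A minor point to record is that the Banach-space-valued integral $\int_a^b \phi(t)D_t f\,dt$ indeed coincides with $\phi\ast f$; this I would verify by pairing against an arbitrary $\omega\in (L^p(\bR))^\ast$ and interchanging the order of integration by Fubini. Together with the norm-limit reduction of the previous paragraph, this completes both assertions.
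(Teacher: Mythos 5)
Your proof is correct and follows essentially the same route as the paper's: approximate $\Delta_\phi$ by Riemann sums of the translations $D_t$ (relying on the strong continuity of $t\mapsto D_t f$ in $L^p(\bR)$) and pass to general $\phi$ via the contractivity of $\phi\mapsto\Delta_\phi$ from $L^1(\bR)$ into $B(L^p(\bR))$. The differences are only in execution: the paper handles arbitrary integrable $\phi$ with compact support directly, using integral coefficients $\int_{t_{k-1}}^{t_k}\phi(s)\,ds$ and a dominated-convergence step, whereas you first reduce to continuous compactly supported $\phi$ so that uniform continuity suffices, and you use Minkowski's integral inequality where the paper sets up a Hahn--Banach duality estimate.
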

\begin{proof}
The continuity of $\Delta_\phi$ is immediate by Young's inequality, which also gives $\|\Delta_\phi\|\leq \|\phi\|_1$. The argument of the second claim is similar to that for $p=2\,$ \cite{lev}.  Suppose first that $\phi$ has compact support $[a,b]$, for some $b>\alpha\geq 0$. Given $n\in\bN$ and $m\in{0,1,\dots,n-1}$,  define $\alpha_{m,n}=\int_{\tau(m,n)}^{\tau(m+1,n)}\phi(s)ds$, where $\tau(m,n)=a+\frac{m}{n}(b-a)$. We claim that the sequence $(T_n)_n$ given by 
\[
T_n=\sum_{m=0}^{n-1} \alpha_{m,n} D_{\tau(m,n)}
\]
converges in the SOT-topology to $\Delta_\phi$. Consider $f\in L^p$. Then by Hahn - Banach theorem
\begin{align*}
\|(D_\phi-T_n)f\|_p&= sup\left\{\bigg|\int_{\bR}(D_\phi-T_n)f(x)g(x)dx\bigg| \,:\, \|g\|_q=1\right\}=\\&=
sup\left\{\bigg|\int_{\bR}\sum_{m=0}^{n-1}\int_{\tau(m,n)}^{\tau(m+1,n)}\phi(t) \left( (D_t-D_{\tau(m,n)})f(x)\right)dt g(x)dx\bigg| \,:\, \|g\|_q=1\right\}=\\&=
sup\left\{\bigg|\int_{\bR}\int_{\bR}\phi(t) \left( (D_t-D_{\rho_n(t)})f(x)\right)dt g(x)dx\bigg| \,:\, \|g\|_q=1\right\}\leq\\&\leq
sup\left\{\int_{\bR}\left(\int_{\bR}\big|\phi(t) \left( (D_t-D_{\rho_n(t)})f(x)\right) g(x)\big| dx\right)dt \,:\, \|g\|_q=1\right\}
\end{align*}
where $\rho_{n}(t)=a+\frac{b-a}{n}\left\lfloor \frac{(t-a)n}{b-a}\right\rfloor\,,\, t\in[a,b]$. Now 
\[
\int_{\bR} \big|\phi(t) \left( (D_t-D_{\rho_n(t)})f(x)\right) g(x)\big| dx \leq \|(D_t-D_{\rho_n(t)})f\|_p \|g\|_q, \]
so it follows that
\[
\|(D_\phi-T_n)f\|_p\leq \int_{\bR} |\phi(t)|\|(D_t-D_{\rho_n(t)})f\|_p dt.
\]
Since $\|(D_t-D_{\rho_n(t)})f\|_p\rightarrow 0$ as $n\rightarrow \infty$ and $|\phi(t)|\|(D_t-D_{\rho_n(t)})f\|_p\leq 2|\phi(t)\|f\|_p$, we get that $\|(D_\phi-T_n)f\|_p\rightarrow 0$, by dominated convergence theorem.

The general case, is a simple application of Young's inequality.
\end{proof}
\begin{rem}
In the $L^2(\bR)$ case, there is a simpler proof, using the unitary Fourier-Plancherel transform $F$. Note that
\[\Delta_\phi=F^\ast M_{\hat{\phi}}F\]
 Since $\phi\in L^1(\bR_+)$, it follows that $\hat{\phi}\in\overline{H^\infty(\bR)}$. Therefore, the multiplication operator $M_{\hat{\phi}}$ lies in the SOT-closed algebra generated by $\{M_{-\lambda}\,:\, \lambda\in\bR_+\}$. Hence, using the fact that  $D_{\lambda}=F^\ast M_{-\lambda}F$, the proof is complete. 
\end{rem}

\begin{lemma}
Let $h\in H^p(\bR),\, \phi\in L^q(\bR_+)$, where $p\in(1,\infty)$ and $q$ is its conjugate exponent. Define $k=\Theta ^{-1}(h\otimes \phi)$. Then, the operator $Intk=M_h\Delta_\phi$
lies in $\G^p\cap\A_{par}^p$.
\end{lemma}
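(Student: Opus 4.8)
The plan is to verify the operator identity and membership in $\G^p$ by a direct kernel computation, and then to obtain membership in $\A_{par}^p$ by an approximation argument that reduces everything to two elementary building blocks. First I would compute $k$ explicitly. Since $\Theta^{-1}=\Theta$ and $(h\otimes\phi)(x)(z)=h(x)\phi(z)$, we get $k(x)(y)=\Theta(h\otimes\phi)(x)(y)=(h\otimes\phi)(x)(x-y)=h(x)\phi(x-y)$, whence
\[
(Intk\,f)(x)=\int_\bR h(x)\phi(x-y)f(y)\,dy=h(x)(\phi\ast f)(x)=(M_h\Delta_\phi f)(x),
\]
so $Intk=M_h\Delta_\phi$. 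For membership in $\G^p$ it suffices to observe that $h\otimes\phi\in\F(\bR;L^q(\bR))\cap L^p(\bR;L^q(\bR))$ with $\|h\otimes\phi\|_{p,q}=\|h\|_p\|\phi\|_q<\infty$, and that $\Theta$ is an isometry of $L^p(\bR;L^q(\bR))$; hence $k=\Theta^{-1}(h\otimes\phi)\in L^p(\bR;L^q(\bR))$ and $Intk\in\G^p$.

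For the inclusion $Intk\in\A_{par}^p$ I would argue by approximation. Pick $h_n\to h$ in $H^p(\bR)$ with each $h_n$ in the linear span of $\{b_u:u\in\bH^+\}$, which is dense by Lemma \ref{dense}, and $\phi_n\to\phi$ in $L^q(\bR)$ with each $\phi_n$ compactly supported in $\bR_+$, so that $\phi_n\in L^1(\bR_+)\cap L^q(\bR_+)$. Writing $k_n=\Theta^{-1}(h_n\otimes\phi_n)$, the bilinear estimate
\[
\|k_n-k\|_{p,q}\le\|h_n\|_p\,\|\phi_n-\phi\|_q+\|h_n-h\|_p\,\|\phi\|_q\longrightarrow0,
\]
combined with the fact that $\|\cdot\|_{p,q}$ dominates the operator norm, shows $Intk_n\to Intk$ in operator norm and hence in SOT. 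As $\A_{par}^p$ is SOT-closed, it remains to show each $Intk_n=M_{h_n}\Delta_{\phi_n}$ lies in $\A_{par}^p$.

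This is where the building blocks enter. Each $\phi_n\in L^1(\bR_+)$, so the convolution lemma yields $\Delta_{\phi_n}\in\A_{par}^p$ at once. For the multiplication factor the decisive point is the representation
\[
M_{b_u}=-i\int_0^\infty e^{iut}M_t\,dt,\qquad u\in\bH^+,
\]
which holds because $b_u(x)=\tfrac{1}{x+u}=-i\int_0^\infty e^{i(x+u)t}\,dt$ when $\mathrm{Im}\,u>0$. Here the coefficient $\psi(t)=-ie^{iut}\chi_{\bR_+}(t)$ lies in $L^1(\bR_+)$, and repeating the proof of the convolution lemma with the strongly continuous modulation group $\{M_t\}$ in place of $\{D_t\}$ --- approximating the integral by Riemann sums $\sum_m\beta_{m,n}M_{\tau(m,n)}$ and bounding the error by $\int_{\bR_+}|\psi(t)|\,\|(M_t-M_{\rho_n(t)})f\|_p\,dt\to0$ via dominated convergence --- gives $M_{b_u}\in\A_{par}^p$. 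Since $b_u\in H^\infty(\bR)$, each $M_{b_u}$ is bounded, so $M_{h_n}=\sum_i c_iM_{b_{u_i}}\in\A_{par}^p$; because $\A_{par}^p$ is an algebra, $M_{h_n}\Delta_{\phi_n}\in\A_{par}^p$, which completes the reduction.

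The main obstacle is precisely the modulation analogue of the convolution lemma, i.e.\ the inclusion $M_{b_u}\in\A_{par}^p$: although formally identical to the translation case, it is the one genuinely new estimate, and it rests on the integrability of $\psi(t)=-ie^{iut}$ over $\bR_+$ (which is where $\mathrm{Im}\,u>0$ is used) together with the strong continuity of $\{M_t\}$ on $L^p(\bR)$. The remaining ingredients --- the kernel computation, the isometry of $\Theta$, and the density reductions --- are routine.
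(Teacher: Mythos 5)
Your proposal is correct, but it diverges from the paper's proof at the decisive step, so a comparison is worthwhile. The shared skeleton is the kernel computation $Intk=M_h\Delta_\phi$, membership in $\G^p$ via the isometry $\Theta$, and a bilinear $\|\cdot\|_{p,q}$-approximation combined with the fact that $\|\cdot\|_{p,q}$ dominates the operator norm; the difference lies in the building blocks. The paper first takes $h\in H^\infty(\bR)$ and $\phi\in L^1(\bR_+)$, applies the convolution lemma to get $\Delta_\phi\in\A_{par}^p$, and then concludes $M_h\Delta_\phi\in\A_{par}^p$ --- which tacitly uses that $M_h\in\A_{par}^p$ for every $h\in H^\infty(\bR)$, a Sarason-type fact (weak-$*$ density of analytic exponential sums in $H^\infty$, together with the coincidence of the WOT- and SOT-closures of convex sets) that the paper never actually proves. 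You instead approximate $h$ by elements of the linear span of the Cauchy kernels $b_u$ (dense in $H^p(\bR)$ by the paper's \emph{first} density lemma, not by Lemma \ref{dense}, which concerns the sets $\Lambda_t$ --- a harmless mis-citation), and you prove $M_{b_u}\in\A_{par}^p$ outright from the representation $M_{b_u}=-i\int_0^\infty e^{iut}M_t\,dt$, valid precisely because $\operatorname{Im}u>0$ makes $t\mapsto e^{iut}$ integrable on $\bR_+$; the Riemann-sum and dominated-convergence argument of the convolution lemma then goes through verbatim for the SOT-continuous group $\{M_t\}$, with the tail $\int_N^\infty$ controlled in operator norm by $\int_N^\infty e^{-t\operatorname{Im}u}\,dt\to 0$, so the reduction to compact parameter intervals is harmless. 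What your route buys is self-containedness: the one ingredient the paper leaves unjustified is exactly the one you replace by an explicit Laplace-type formula, at the modest cost of an extra approximation layer (finite sums of $b_u$'s and compactly supported $\phi_n\in L^1\cap L^q(\bR_+)$), which your bilinear estimate $\|k_n-k\|_{p,q}\le\|h_n\|_p\|\phi_n-\phi\|_q+\|h_n-h\|_p\|\phi\|_q$ handles correctly.
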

\begin{proof}
First, consider $h\in H^\infty(\bR), \phi\in L^1(\bR_+)$. Then
\begin{align*}
(Intk f)(x)&=\int_\bR \Theta ^{-1}(h\otimes\phi)(x)(y)f(y)dy=\\&=\int_{\bR}h(x)\phi(x-y)f(y)dy=(M_h\Delta_\phi f)(x),
\end{align*}
so $\|Intk\|\leq \|h\|_\infty \|\phi\|_1$. By the previous lemma $\Delta_\phi\in SOT-alg\{D_t\,:\,t\in \bR_+\}$, hence $Intk\in\A_{par}^p$. 
Take now $h\in H^p(\bR)$ and $\phi\in L^q(\bR_+)$. Then there exist $h_m\in (H^\infty\cap H^p)(\bR)$, 
$\phi_m\in (L^1\cap L^q)(\bR_+)$, such that $h_m\stackrel{\|\cdot\|_p}{\rightarrow}h$ and $\phi_m\stackrel{\|\cdot\|_q}{\rightarrow} \phi$. Now it is straightforward to show that $h_m\otimes\phi_m\stackrel{\|\cdot\|_{p,q}}{\rightarrow}h\otimes\phi$. Since the norm $\|\cdot\|_{p,q}$ dominates the operator norm and $\A_{par}^p$ is norm closed, 
\[Intk=Int(\Theta ^{-1}(h\otimes \phi))\in \A_{par}^p.\]
Moreover, the fact that $h$ and $\phi$ lie in $H^p(\bR)$ and $L^q(\bR_+)$ respectively implies that $Intk\in \G^p$.
\end{proof}

\begin{prop}\label{babyequality}
$\G^p\cap\A_{FB}^p=\G^p\cap\A_{par}^p$, for every $p\in(1,\infty)$.
\end{prop}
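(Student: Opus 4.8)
The inclusion $\G^p\cap\A_{par}^p\subseteq\G^p\cap\A_{FB}^p$ is immediate from $\A_{par}^p\subseteq\A_{FB}^p$, so the plan is to prove the reverse inclusion: given $Intk\in\G^p\cap\A_{FB}^p$, to show that $Intk\in\A_{par}^p$. Write $\kappa=\Theta(k)$. By Proposition \ref{prop1} we know that $\kappa\in L^p(\bR;L^q(\bR_+))$ and that, for every Borel set $A$ of finite measure and every $h\in H^q(\bR)$,
\[ \int_{\bR} h(x)\,F_A(x)\,dx=0,\qquad F_A(x):=\int_A\kappa(x)(y)\,dy. \]
H\"older's inequality gives $|F_A(x)|\le |A|^{1/p}\|\kappa(x)\|_q$, so $F_A\in L^p(\bR)$, and the displayed identity says exactly that $F_A$ annihilates $H^q(\bR)$. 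The Cauchy transform computation in the first density lemma, applied with the roles of $p$ and $q$ interchanged, identifies the annihilator of $H^q(\bR)$ in $L^p(\bR)$ as $H^p(\bR)$; hence $F_A\in H^p(\bR)$ for every finite-measure Borel set $A\subseteq\bR_+$.

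The main idea is to approximate $\kappa$, in the $\|\cdot\|_{p,q}$ norm, by finite sums $\sum_j h_j\otimes\phi_j$ with $h_j\in H^p(\bR)$ and $\phi_j\in L^q(\bR_+)$. Each such summand is the kernel of the operator $M_{h_j}\Delta_{\phi_j}$, which lies in $\G^p\cap\A_{par}^p$ by the preceding lemma; since $\G^p$ is a linear subspace (Lemma \ref{ideal}), $\Theta$ is an isometry, the norm $\|\cdot\|_{p,q}$ dominates the operator norm, and $\A_{par}^p$ is norm-closed (being SOT-closed), any such approximation of $\kappa$ forces $Intk\in\G^p\cap\A_{par}^p$. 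To produce approximants whose first tensor factors automatically lie in $H^p(\bR)$, I would use conditional expectations in the second variable. Let $B_k^{(n)}=[k2^{-n},(k+1)2^{-n})$, $k\ge 0$, be the dyadic intervals of $\bR_+$ and let $\E_n$ be the associated conditional expectation on $L^q(\bR_+)$; it is a contraction and $\E_n g\to g$ in $L^q(\bR_+)$ for every $g$ by the martingale convergence theorem. By Lemma \ref{extension}, $\E_n$ lifts to $\widetilde{\E_n}\in B(L^p(\bR;L^q(\bR_+)))$ with $\|\widetilde{\E_n}\|=\|\E_n\|\le 1$, and combining this uniform bound with convergence on the dense set of elementary tensors (Proposition \ref{dense1}) yields $\widetilde{\E_n}\kappa\to\kappa$ in $\|\cdot\|_{p,q}$.

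It remains to identify $\widetilde{\E_n}\kappa$. Since $\widetilde{\E_n}$ acts by applying $\E_n$ to the value $\kappa(x)\in L^q(\bR_+)$, we get
\[ \widetilde{\E_n}\kappa=\sum_{k\ge 0}F_{B_k^{(n)}}\otimes\psi_k^{(n)},\qquad \psi_k^{(n)}=|B_k^{(n)}|^{-1}\chi_{B_k^{(n)}}\in L^q(\bR_+), \]
and by the first paragraph every coefficient $F_{B_k^{(n)}}$ lies in $H^p(\bR)$. The finite partial sums of this series are elements of the span of $\{h\otimes\phi:h\in H^p(\bR),\phi\in L^q(\bR_+)\}$ and converge to $\widetilde{\E_n}\kappa$ in $\|\cdot\|_{p,q}$; letting $n\to\infty$ places $\kappa$ in the $\|\cdot\|_{p,q}$-closure of that span, which by the second paragraph completes the proof. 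The step I expect to demand the most care is this vector-valued approximation: verifying that the lift $\widetilde{\E_n}$ of Lemma \ref{extension} really does act as $\E_n$ in the range variable and that $\widetilde{\E_n}\kappa\to\kappa$ in the Bochner norm $\|\cdot\|_{p,q}$, i.e. transporting scalar martingale convergence in $L^q(\bR_+)$ into the space $L^p(\bR;L^q(\bR_+))$.
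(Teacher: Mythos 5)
Your argument is correct, and it reaches the conclusion by a genuinely different route than the paper. Both proofs rest on the same pillars --- Proposition \ref{prop1}, the lemma that $Int\,\Theta^{-1}(h\otimes\phi)=M_h\Delta_\phi$ lies in $\G^p\cap\A_{par}^p$ for $h\in H^p(\bR)$, $\phi\in L^q(\bR_+)$, and the Cauchy-transform equivalence inside the first density lemma --- and both amount to showing that $\Theta(k)$ lies in the $\|\cdot\|_{p,q}$-closed span of $\{h\otimes\phi\,:\,h\in H^p(\bR),\ \phi\in L^q(\bR_+)\}$. The paper does this by duality: it takes $\omega\in L^q(\bR;L^p(\bR))$ annihilating every $\Theta^{-1}(h\otimes\phi)$, specializes to $\omega\in\Theta(\F(\bR;\S(\bR)))$, deduces that the coefficient functions $f_m$ lie in $H^q(\bR)$, and then kills $\omega(k)$ with equation \eqref{eq1}. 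You work on the kernel side instead: the slice integrals $F_A$ lie in $H^p(\bR)$ (the mirror-image use of \eqref{eq1} plus the same Cauchy-transform fact, with $p$ and $q$ interchanged), and dyadic conditional expectation in the range variable then manufactures explicit approximants $\sum_{k\le N}F_{B_k^{(n)}}\otimes\psi_k^{(n)}$ inside the span. As for what each approach buys: the paper's argument is shorter, but its reduction ``it suffices to check $\omega$ in a dense subset of the dual'' is the delicate point, since the annihilator of the span is a proper closed subspace and density of $\Theta(\F(\bR;\S(\bR)))$ in the whole dual does not by itself give density inside that annihilator; your construction bypasses any such reduction and produces the approximation outright, at the price of the vector-valued bookkeeping you already flag (that $\widetilde{\E_n}$ acts as $\E_n$ pointwise in the range variable --- provable by approximating $\kappa$ by simple functions and passing to an a.e.\ convergent subsequence --- and that $\widetilde{\E_n}\to I$ strongly, which follows from $\|\widetilde{\E_n}\|\le 1$ plus convergence on elementary tensors). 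Two small repairs to record: Lemma \ref{extension} is stated for operators on $L^q(\bR)$, so extend $\E_n$ to $L^q(\bR)$ (e.g.\ via the dyadic filtration of all of $\bR$; Proposition \ref{prop1}(1) guarantees this changes nothing when applied to $\Theta(k)$), and the $\|\cdot\|_{p,q}$-convergence of the partial sums over $k$ deserves a line of justification, namely disjointness of the supports of the $\psi_k^{(n)}$ together with dominated convergence, using $\bigl\|\sum_{k>N}F_{B_k^{(n)}}(x)\psi_k^{(n)}\bigr\|_q\le\|\Theta(k)(x)\|_q$.
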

\begin{proof}
By Proposition \ref{prop1}, if $Intk\in \G^p\cap \A_{FB}^p$, then $\Theta (k)\in L^p(\bR;L^q(\bR_+))$
and 
\begin{equation}\label{eq1}
 \int_{\bR}\left(\int_{\bR} \Theta (k)(x)(y)\eta(x)\chi_A(y)dy\right)dx=0,
 \end{equation}
 for every Borel set $A$ of finite measure and $\eta\in H^q(\bR)$. 
 
 By the Hahn - Banach theorem and Proposition \ref{dual}, it suffices to prove, that given $\omega\in (L^p(\bR;L^q(\bR)))^\ast\cong L^q (\bR; L^p(\bR))$, such that $\omega(\Theta ^{-1}(h\otimes\phi))=0$, for every $h\in H^p(\bR), \phi\in L^q(\bR_+)$, then $\omega(k)=0$, for every kernel $k$ that corresponds to an operator $Intk\in\G^p\cap \A_{FB}^p$. 
 It suffices to check for $\omega$ in a dense subset of $L^q(\bR;L^p(\bR_+))$, so by Proposition \ref{dense1}, for all $\omega\in  
 \Theta (\F(\bR;\S(\bR))\cap L^p(\bR;L^q(\bR)))$. We have
 \begin{align*}
 \omega(k)&=\sum\limits_{m=1}^{n}\int_\bR\left(\int_\bR k(x)(y)f_m(x)\chi_{A_m}(x-y)dy\right)dx=\\&=
 \sum\limits_{m=1}^{n} \int_\bR\left(\int_\bR \Theta k (x)(y) f_m(x)\chi_{A_m}(y)dy\right)dx
 \end{align*} 
 where $f_m\in L^q(\bR)$ and $\{A_m\}_{m=1,\dots,n}$ a family of pairwise disjoint Borel sets. Now
 \begin{align*}
 0=\omega(\Theta ^{-1}(h\otimes\phi))&=\sum\limits_{m=1}^{n}\int_\bR\left(\int_\bR h(x)\phi(x-y)f_m(x)\chi_{A_m}(x-y)dy\right)dx=\\
 &=\sum\limits_{m=1}^{n}\int_\bR\left(\int_\bR \phi(x-y)\chi_{A_m}(x-y)dy\right)h(x)f_m(x)dx=\\
 &=\sum\limits_{m=1}^{n}\int_\bR\left(\int_\bR \phi(y)\chi_{A_m}(y)dy\right)h(x)f_m(x)dx.
\end{align*}
If  $A_m$ lies in $\bR_-$, for every $m=1,\dots,n$, and $\Theta (k)\in L^p(\bR;L^q(\bR_+))$, then $\omega(k)=0$. 
Therefore, we may assume that $A_m\subseteq \bR_+$, for every $m\in\{1,\dots,n\}$. Fix now some $m_0\in\{1,\dots,n\}$ and take 
$\phi=\chi_{A_{m_0}}$. Then
\[\int_\bR h(x)f_{m_0}(x)dx=0,\,\forall h\in H^p(\bR).\]
Therefore, $f_m\in H^q(\bR)$, for every $m$.\[\omega(k)=\sum\limits_{m=1}^{n} \int_\bR\left(\int_\bR \Theta k (x)(y) f_m(x)\chi_{A_m}(y)dy\right)dx=0\]
by equation \eqref{eq1}.
\end{proof}
The following proposition and proof follow the pattern for the case $p=2$, given in \cite{lev}.
\begin{prop}
For every $p\in(1,\infty)$, the algebra $\A_{par}^p$ contains a bounded approximate identity of elements in $\G^p$.
\end{prop}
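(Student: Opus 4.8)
The plan is to exhibit an explicit sequence $(E_n)_{n\in\bN}$ lying in $\G^p\cap\A_{par}^p$, uniformly bounded in operator norm and converging to the identity in the strong operator topology; a uniformly bounded sequence with $E_n\to I$ in SOT automatically satisfies $E_nT\to T$ and $TE_n\to T$ in SOT for every bounded $T$, and so is a bounded approximate identity. The preceding lemma supplies, for each $h\in H^p(\bR)$ and $\phi\in L^q(\bR_+)$, an operator $M_h\Delta_\phi=Int(\Theta^{-1}(h\otimes\phi))$ in $\G^p\cap\A_{par}^p$, so the natural idea is to take $\phi=\phi_n$ to be a one-sided approximate identity for convolution and $h=h_n$ to be a Hardy-space function tending to the constant $1$.

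Concretely, I would set $\phi_n=n\chi_{[0,1/n]}$, so that $\phi_n\ge 0$, $\|\phi_n\|_1=1$ and $\phi_n\in L^q(\bR_+)$, giving $\|\phi_n\ast f-f\|_p\to 0$ for every $f\in L^p(\bR)$ by the usual approximate-identity estimate. For the multiplier I would take $h_n(x)=\frac{in}{x+in}=in\,b_{in}(x)$; since $in\in\bH^+$ this lies in $H^p(\bR)\cap H^\infty(\bR)$ with $\|h_n\|_\infty=1$, and a direct computation shows $h_n(x)\to 1$ pointwise, whence $\|h_nf-f\|_p\to 0$ by dominated convergence. Defining $E_n=M_{h_n}\Delta_{\phi_n}$, the lemma gives $E_n\in\G^p\cap\A_{par}^p$, while $\|E_n\|\le\|h_n\|_\infty\|\phi_n\|_1=1$ yields the uniform bound. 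Strong convergence then follows from the split $\|E_nf-f\|_p\le\|h_n\|_\infty\|\phi_n\ast f-f\|_p+\|h_nf-f\|_p\to 0$.

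The step that demands a genuine idea, rather than routine estimation, is the choice of the multiplier $h_n$. The pure convolution operator $\Delta_{\phi_n}$ is the obvious candidate for an approximate identity, but it fails to lie in $\G^p$: its kernel $\phi_n(x-y)$ is translation invariant, so $\|\phi_n(x-\cdot)\|_q$ is constant in $x$ and the $\|\cdot\|_{p,q}$-norm of the kernel diverges. Multiplying by $M_{h_n}$ with $h_n\in H^p(\bR)$ forces the kernel $h_n(x)\phi_n(x-y)$ to decay in the $x$-variable and hence to become genuinely $(p,q)$-integrable, while the requirement $h_n\in H^\infty(\bR)$ keeps $M_{h_n}$, and therefore $E_n$, inside $\A_{par}^p$, and the conditions $h_n\to 1$ with $\|h_n\|_\infty\le 1$ preserve both boundedness and strong convergence to $I$. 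Balancing these three demands — membership in $\G^p$, membership in $\A_{par}^p$, and strong convergence to the identity — on a single elementary family is the crux of the argument.
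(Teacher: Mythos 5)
Your proposal is correct and follows essentially the same route as the paper: the same kernels $h_n(x)=\frac{ni}{x+ni}$, $\phi_n=n\chi_{[0,1/n]}$, the same operators $M_{h_n}\Delta_{\phi_n}$ obtained from the preceding lemma, and the same norm bound $\|h_n\|_\infty\|\phi_n\|_1\le 1$. The only (immaterial) difference is in verifying SOT-convergence to $I$: you use a direct triangle-inequality split together with the classical approximate-identity estimate for convolution, whereas the paper checks $M_{h_n}\to I$ and $\Delta_{\phi_n}\to I$ separately on compactly supported functions and then invokes SOT-continuity of multiplication on the unit ball.
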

\begin{proof}
Take $h_n(x)=\frac{ni}{x+ni}$ and $\phi_n(y)=n\chi_{[0,1/n]}(y)$. It is trivial to see that 
$h_n$ and $\phi_n$ lie in $H^r(\bR)$ and $L^r(\bR_+)$, respectively, for every $r\in(1,\infty)$.
Let $k_n=\Theta ^{-1}(h_n\otimes\phi_n)$. Then $\|Intk_n\|\leq \| h_n\|_\infty \|\phi_n\|_1\leq 1$.
Since $h_n\rightarrow 1$ uniformly on compact sets of the real line, it follows that $M_{h_n}\stackrel{SOT}{\rightarrow}I$.
Now given $f\in C_\mathrm{C}(\bR)$, note that
\begin{align*}
\|\Delta_{\phi_n}f-f\|_p^p&=\int_\bR\bigg|\int_\bR n\chi_{[0,1/n]}(y)f(x-y)dy-f(x)\bigg|^p dx=\\&=
\int_\bR\bigg|\int_0^{1/n}n f(x-y)dy-f(x)\bigg|^p dx
\end{align*}
Check that $\bigg|\int_0^{1/n}n f(x-y)dy-f(x)\bigg|^p\leq 2^p\|f\|_\infty^p \chi_s(x)$, where $S$ is the compact set 
\[S=\{x+\tau\,|\, x\in\textrm{\,supp}f,\,\tau\in[0,1]\}.\]
Hence by dominated convergence $\Delta_{\phi_n}f\rightarrow f$.
Since $C_\mathrm{C}(\bR)$ is dense in $L^p(\bR)$ it follows that $\Delta_{\phi_n}\stackrel{SOT}{\rightarrow} I$.
Multiplication is $SOT$-continuous on the closed unit ball of bounded operators, so $M_{h_n}\Delta_{\phi_n} \stackrel{SOT}{\rightarrow} I$.
\end{proof}

\begin{thm} 
For every $p\in(1,\infty)$, the parabolic algebra $A_{par}^p$ is equal to the Fourier binest algebra $\A_{FB}^p$.
\end{thm}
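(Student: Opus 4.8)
The plan is to establish the reverse inclusion $\A_{FB}^p\subseteq\A_{par}^p$, since the inclusion $\A_{par}^p\subseteq\A_{FB}^p$ has already been recorded. At this point the theorem is essentially an assembly of the three immediately preceding results: the right ideal property of $\G^p$ (Lemma \ref{ideal}), the identity $\G^p\cap\A_{FB}^p=\G^p\cap\A_{par}^p$ (Proposition \ref{babyequality}), and the existence of a bounded approximate identity $(E_n)$ in $\A_{par}^p$ with each $E_n\in\G^p$ (the last proposition).

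First I would fix an arbitrary $T\in\A_{FB}^p$ and form the products $E_nT$. Since $\G^p$ is a right ideal in $B(L^p(\bR))$ and each $E_n\in\G^p$, we get $E_nT\in\G^p$ for every $n$. On the other hand, $E_n\in\A_{par}^p\subseteq\A_{FB}^p$, and $\A_{FB}^p$ is an algebra containing $T$, so $E_nT\in\A_{FB}^p$ as well. Hence $E_nT\in\G^p\cap\A_{FB}^p$, which by Proposition \ref{babyequality} coincides with $\G^p\cap\A_{par}^p$; in particular $E_nT\in\A_{par}^p$ for all $n$.

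It then remains only to pass to the SOT limit. Because $(E_n)$ is an approximate identity, $E_ny\to y$ for every $y\in L^p(\bR)$; choosing $y=Tx$ yields $(E_nT)x=E_n(Tx)\to Tx$ for each $x$, i.e. $E_nT\stackrel{SOT}{\to}T$. As $\A_{par}^p$ is SOT-closed and each $E_nT$ lies in it, we conclude $T\in\A_{par}^p$. This gives $\A_{FB}^p\subseteq\A_{par}^p$, and combined with the known inclusion, the two algebras are equal.

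I do not expect a genuine obstacle at this final stage: all the analytic weight has already been carried by Proposition \ref{prop1} (the measure-theoretic constraints forcing $\Theta(k)$ into $L^p(\bR;L^q(\bR_+))$ and forcing the annihilation of $H^q(\bR)$) and by the duality argument of Proposition \ref{babyequality}. The single point deserving attention is that left multiplication by $E_n$ must keep us inside $\G^p$; this is precisely why $\G^p$ was arranged as a \emph{right} ideal and why the approximate identity is required to consist of elements of $\G^p$, so that $E_nT$ is simultaneously a $(p,q)$-integral operator and an element of the binest algebra.
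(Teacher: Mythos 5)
Your proposal is correct and follows exactly the paper's argument: fix $T\in\A_{FB}^p$, use the right ideal property of $\G^p$ together with Proposition \ref{babyequality} to place the products $X_nT$ (your $E_nT$) in $\G^p\cap\A_{par}^p$, and then pass to the SOT limit using the boundedness of the approximate identity and the SOT-closedness of $\A_{par}^p$. The only difference is that you spell out the intermediate verifications (membership of $E_nT$ in $\A_{FB}^p$ and the pointwise convergence $E_nTx\to Tx$) which the paper leaves implicit.
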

\begin{proof}
As we have noted before, it suffices to prove that $\A_{FB}^p\subseteq A_{par}^p$. Let $T\in \A_{FB}^p$ and $(X_n)_{n\geq1}$ be the bounded approximate identity of the previous proposition. 
By lemma \ref{ideal} and proposition \ref{babyequality}, the operators $X_nT$ lie in $\G^p\cap \A_{FB}^p=\G^p\cap \A_{par}^p$.
Since $\A_{par}^p$ is $SOT$-closed, the given operator $T=SOT-\lim\limits_n X_nT$ lies in $\A_{par}^p$.
\end{proof}

\begin{prop}
The Fourier binest algebra $\A_{FB}^\infty$ is strictly larger than the parabolic algebra $\A_{par}^\infty$.
\end{prop}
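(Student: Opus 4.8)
The plan is to prove strictness by exhibiting a single operator lying in $\A_{FB}^\infty$ but not in $\A_{par}^\infty$, the two algebras being separated by how they act on the constant function $\mathbf{1}\in L^\infty(\bR)$. The decisive structural difference with the case $1<p<\infty$ is that now $\mathbf{1}\in L^\infty(\bR)$: a net $A_i\stackrel{SOT}{\rightarrow}T$ then forces $A_i\mathbf{1}\rightarrow T\mathbf{1}$ in the $\|\cdot\|_\infty$ norm, so the orbit of $\mathbf{1}$ becomes a rigid invariant of $\A_{par}^\infty$ that has no counterpart for finite $p$ (where $\mathbf 1\notin L^p(\bR)$).

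First I would compute the image of $\mathbf{1}$ under the generators. A direct induction on word length, using that each $D_\mu$ fixes constants and that $M_\lambda D_\mu = e^{i\lambda\mu}D_\mu M_\lambda$, shows that any word $W$ in $\{M_\lambda:\lambda\geq0\}\cup\{D_\mu:\mu\geq0\}$ satisfies $W\mathbf{1}=c\,e^{i\Lambda x}$ for some unimodular $c$ and some frequency $\Lambda\geq0$ (each $D_\mu$ contributes only a unimodular scalar, while each $M_\lambda$ adds $\lambda\geq0$ to the frequency). Hence the non-closed algebra generated by the two semigroups maps $\mathbf{1}$ into $\mathrm{Trig}^{+}:=\mathrm{span}\{e^{i\Lambda x}:\Lambda\geq0\}$. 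Passing to the SOT closure and evaluating at $\mathbf{1}$, I would conclude that every $T\in\A_{par}^\infty$ obeys $T\mathbf{1}\in\overline{\mathrm{Trig}^{+}}^{\,\|\cdot\|_\infty}\subseteq\mathrm{AP}(\bR)$, the space of Bohr almost periodic functions, since a uniform limit of trigonometric polynomials is almost periodic.

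Next I would produce the witness. Take $b_i(x)=\tfrac{1}{x+i}$, which lies in $H^\infty(\bR)$ with $\|b_i\|_\infty=1$ and which belongs to $C_0(\bR)$. Since multiplication by an $H^\infty$ function preserves each $L^\infty[t,\infty)$ and each $e^{i\lambda x}H^\infty(\bR)$ (the latter because $H^\infty(\bR)$ is an algebra containing $b_i$), the operator $M_{b_i}$ lies in $\A_{FB}^\infty$. However $M_{b_i}\mathbf{1}=b_i$ is a nonzero element of $C_0(\bR)$, and no nonzero almost periodic function tends to $0$ at infinity; hence $b_i\notin\mathrm{AP}(\bR)$ and therefore $M_{b_i}\notin\A_{par}^\infty$. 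Combined with the elementary inclusion $\A_{par}^\infty\subseteq\A_{FB}^\infty$ (the generators leave the binest invariant and $\A_{FB}^\infty$ is SOT-closed), this gives the strict inclusion.

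The only genuinely non-formal ingredient is the fact that $\mathrm{AP}(\bR)\cap C_0(\bR)=\{0\}$, which I would justify via the relative density of the $\varepsilon$-almost periods: a nonzero almost periodic function attains values arbitrarily close to its supremum on an unbounded set, contradicting decay at infinity. For orientation I would also remark why the argument of the previous sections collapses at $p=\infty$: there is no vector playing the role of $\mathbf{1}$, and concretely the convolution operators $\Delta_{\phi_n}$ fail to converge $SOT$ to the identity on $L^\infty(\bR)$ (convolution with an approximate identity cannot recover a discontinuous bounded function in the uniform norm), so the bounded approximate identity in $\G^p$ underpinning the equality for $1<p<\infty$ no longer exists.
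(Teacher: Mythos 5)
Your proof is correct and follows essentially the same route as the paper: both arguments exploit the fact that for $p=\infty$ the constant function $\mathbf{1}$ lies in the space, so that SOT-approximation by polynomials in the generators forces $T\mathbf{1}$ into the uniform closure of the analytic trigonometric polynomials (the analytic almost periodic functions $AAP(\bR)$), while $\A_{FB}^\infty$ contains $M_\phi$ for every $\phi\in H^\infty(\bR)$. The only difference is cosmetic: the paper invokes Besicovitch for the strict inclusion $AAP(\bR)\subsetneq H^\infty(\bR)$, whereas you make the witness explicit and self-contained by taking $b_i(x)=1/(x+i)\in H^\infty(\bR)\cap C_0(\bR)$ and observing that no nonzero almost periodic function vanishes at infinity.
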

\begin{proof}
Note first that the non-selfadjoint $\|\cdot\|_\infty$-closed algebra of the trigonometric polynomials $\{e^{i\lambda x}:\lambda\geq 0\}$ is the algebra $AAP(\bR)$ of analytic almost periodic functions \cite{bes}, which is strictly smaller than $H^\infty(\bR)$. Take a function $\phi$ that lies in $H^\infty(\bR)$ and it is not an element of $AAP(\bR)$. It suffices to show that $M_\phi\notin \A_{par}^\infty$. If this is not the case, there is some sequence $p_n(M_\lambda, D_\mu)$ in the non-closed algebra generated by $\{M_\lambda, D_\mu\,:\, \lambda,\mu\geq0\}$ which converges strongly to $M_\phi$. Thus for any $f\in L^\infty(\bR)$, we have
\[
\bigg\|p_n(M_{\lambda}, D_{\mu})f-M_\phi f\bigg\|_\infty\rightarrow 0, \text{ as} n\rightarrow\infty. 
\]
Choosing $f\equiv 1$, it follows that 
\[
\bigg\|p_n(M_{\lambda}, I)f-M_\phi f\bigg\|_\infty\rightarrow 0, \text{ as} n\rightarrow\infty. 
\]
and so $\phi\in AAP(\bR)$, a contradiction.
\end{proof}

\begin{rem}
It remains unclear to the author whether the parabolic operator algebras $\A_{par}^{1}$ and $\A_{par}^{\infty}$ acting on the respective Banach spaces $L^1(\bR)$ and $L^\infty(\bR)$  are reflexive operator algebras.
\end{rem}

\medskip
\section{The lattice of the parabolic algebra}
 
Let
$K_{\lambda,s}^p=M_\lambda M_{\phi_s}H^p(\bR)$ where $\phi_s(x)=e^{-isx^2/2}$. This is evidently an invariant subspace for the multiplication semigroup and for $s\geq 0$ one can check that it is invariant for the translation semigroup. Thus
for $s\geq 0$ the nest 
$\N_s=M_{\phi_s}\N_a$ is contained in $Lat \A_{par}^p$ and these nests are distinct. In fact any two nontrivial subspaces from nests with distinct $s$ parameter have trivial intersection. 

Suppose now that $p=2$. With the strong operator topology for the associated orthogonal subspace projections it can be shown (\cite{kat-pow-1}) that the set of these nests for $s\geq 0$, together with the Volterra nest $\N_v^2$, is homeomorphic to the closed unit disc.
A cocycle argument given in   \cite{kat-pow-1} leads to the fact that every invariant subspace for $\A_{par}^2$ is of this form for $p=2$. That is

\begin{equation}\label{LatAp}
Lat\A_{par}^2=\{K_{\lambda,s}^2|\lambda\in\bR,s\geq 0\}\cup \N_v
\end{equation}

We prove now the corresponding result for the general case of $\A_{par}^p$, where $1< p<\infty$. 
\medskip
 
 Let $K$ be a non-trivial element of $Lat \A_{par}^p$. By the Beurling theorem, either $K= L^p(E)$ for some Borel set $E\subseteq \bR$ or $K=M_\phi H^p(\bR)$ for some unimodular function $\phi$. On the other hand,  the subspace $K\cap L^2(\bR)$ is invariant under the generators of the parabolic algebra. Therefore, the $\|\cdot\|_2$-closure of $K\cap L^2(\bR)$ lies in $Lat \A_{par}^2$. Hence, according to the Beurling -Lax theorem, we have two cases.
 In the first case, where $K=L^p(E)$, then 
 \[\overline{L^p(E)\cap L^2(\bR)}^{\|\cdot\|_2}\in Lat \A_{par}^2\Rightarrow
 L^2(E)\in Lat \A_{par}^2\Rightarrow E=[t,\infty),\]
 for some $t\in\bR$. In the second case, $K= M_\phi H^p(\bR)$, which implies 
 \[\overline{M_\phi H^p(\bR)\cap L^2(\bR)}^{\|\cdot\|_2}\in Lat \A_{par}^2\Rightarrow
 M_\phi H^2(\bR)\in Lat \A_{par}^2\Rightarrow M_\phi= M_{\phi_s}M_\lambda,\]
 for some $s\in[0,+\infty),\lambda\in \bR$. Hence, we have the following result.
\begin{thm} Given $p\in(1,\infty)$, the invariant subspace lattice of the algebra $\A_{par}^p$ is 
\begin{equation*}\label{LatA_{par}^p}
Lat\A_{par}^p=\{K_{\lambda,s}^p|\lambda\in\bR,s\geq 0\}\cup \N_v^p
\end{equation*}
\end{thm}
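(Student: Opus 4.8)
The plan is to establish the two inclusions separately, with the reverse containment being a direct computation and the forward containment carrying the real content. For the reverse inclusion I would check that every subspace in the proposed list is left invariant by the two generating semigroups. For a Volterra subspace $L^p[t,\infty)$ this is immediate: $M_\lambda$ preserves supports, while $D_\mu$ with $\mu\geq 0$ sends $[t,\infty)$ to $[t+\mu,\infty)$. For $K_{\lambda,s}^p=M_\lambda M_{\phi_s}H^p(\bR)$ the one identity to record is $D_\nu M_{\phi_s}=c\,M_{\phi_s}M_{s\nu}D_\nu$, where $c$ is a unimodular constant and $M_{s\nu}$ denotes multiplication by $e^{is\nu x}$; for $\nu\geq 0$ and $s\geq 0$ one has $s\nu\geq 0$, so $M_{s\nu}$ maps $H^p(\bR)$ into itself, and together with $D_\nu H^p(\bR)=H^p(\bR)$ and $M_\mu H^p(\bR)\subseteq H^p(\bR)$ for $\mu\geq 0$ this yields invariance under both semigroups. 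This is exactly the place where the hypothesis $s\geq 0$ is used.

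For the forward inclusion, let $K$ be a non-trivial element of $Lat\,\A_{par}^p$. Since $\A_{par}^p$ contains $\{M_\lambda:\lambda\geq 0\}$, the $L^p$ Beurling theorem applies and presents two cases: either $K=L^p(E)$ for some Borel set $E\subseteq\bR$, or $K=M_\phi H^p(\bR)$ for some unimodular $\phi$. The strategy is to transport the problem to the Hilbert space level, where the lattice is already known by \eqref{LatAp}. Because each $M_\lambda$ and each $D_\mu$ is an isometry of both $L^p(\bR)$ and $L^2(\bR)$, the subspace $K\cap L^2(\bR)$ is invariant under the generators of the parabolic algebra, so its $L^2$-closure $\widetilde{K}:=\overline{K\cap L^2(\bR)}^{\,\|\cdot\|_2}$ belongs to $Lat\,\A_{par}^2$.

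It then remains to identify $\widetilde K$ and to read off the answer from \eqref{LatAp}. In the first case I would argue that $\widetilde K=L^2(E)$: the simple functions supported on the finite-measure subsets of $E$ lie in $L^p(E)\cap L^2(E)$ and are $L^2$-dense in $L^2(E)$, while $K\cap L^2(\bR)\subseteq L^2(E)$ is clear. Since $L^2(E)$ is invariant under the \emph{whole} multiplication group it cannot be of analytic type $K_{\lambda,s}^2$ unless trivial, so \eqref{LatAp} forces $L^2(E)\in\N_v$, that is $E=[t,\infty)$ up to a null set, and hence $K=L^p[t,\infty)$. In the second case, using that $\phi$ is unimodular one has $K\cap L^2(\bR)=M_\phi\bigl(H^p(\bR)\cap L^2(\bR)\bigr)=M_\phi\bigl(H^p(\bR)\cap H^2(\bR)\bigr)$, and $H^p(\bR)\cap H^2(\bR)$ is $L^2$-dense in $H^2(\bR)$; therefore $\widetilde K=M_\phi H^2(\bR)$. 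By \eqref{LatAp} this Hardy-type subspace equals some $K_{\lambda,s}^2=M_\lambda M_{\phi_s}H^2(\bR)$ with $s\geq 0$, and two unimodular symbols that determine the same $H^2$-subspace differ only by a unimodular constant, so $\phi=c\,e^{i\lambda x}\phi_s$ and consequently $K=M_\phi H^p(\bR)=K_{\lambda,s}^p$.

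The step I expect to be the main obstacle is the closure-identification, namely the guarantee that passing to $L^2$ and taking the closure recovers the full $p=2$ avatar of $K$ rather than a strictly smaller subspace. Concretely this is the density of $H^p(\bR)\cap H^2(\bR)$ in $H^2(\bR)$ (and its elementary $L^p(E)$ analogue). I would establish it using the rational functions $b_u(x)=1/(x+u)$ introduced in Section~2, which lie simultaneously in every $H^r(\bR)$ and whose finite linear combinations are dense in each $H^r(\bR)$; their common membership across all exponents is precisely what links the $L^p$ and $L^2$ theories. Once these density facts are secured, the reduction to the already-established Hilbert-space classification \eqref{LatAp} is automatic, and the two inclusions combine to give the stated description of $Lat\,\A_{par}^p$.
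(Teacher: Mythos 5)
Your proposal is correct and follows essentially the same route as the paper: apply the $L^p$ Beurling theorem to split into the cases $K=L^p(E)$ and $K=M_\phi H^p(\bR)$, pass to the $\|\cdot\|_2$-closure of $K\cap L^2(\bR)$, which lies in $Lat\,\A_{par}^2$, and then read off the answer from the known $p=2$ classification \eqref{LatAp}. The only difference is that you make explicit several steps the paper leaves implicit (the commutation relation giving invariance of $K_{\lambda,s}^p$, and the closure identifications via density of the span of the $b_u$ in each $H^r$), which is a faithful filling-in rather than a new argument.
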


Recall that the reflexive closure of a set of closed subspaces $\L$ is the subspace lattice $Lat Alg \L$. Thus the theorem identifies the reflexive closure of the binest $\L_{FB}^p$.

\begin{rem}
In \cite{kat-pow-1}, Katavolos and Power proved that $Lat\A_{par}^2$, viewed as a topological space of projections on $L^2(\bR)$, endowed with the strong operator topology, is homeomorphic to the closed unit disc. In particular, they obtained the so-called strange limit  
\[
P_{K_{s,\lambda}^2}\stackrel{SOT}{\rightarrow} P_{L^2[\lambda,+\infty)}, \text{ as } s\rightarrow \infty,
\]
which relies on the Paley - Wiener theorem and the fact that the Fourier transform is unitary on $L^2(\bR)$. Even though the Riesz projection from $L^p(\bR)$ onto $H^p(\bR)$ remains bounded, it is unknown to the author if the above convergence still holds, for $p\in (1, +\infty)\backslash\{2\}$. 
\medskip

We expect that the operator algebras $\A_{par}^p$, for $1<p<\infty$, are pairwise non isomorphic, even as rings of linear operators. However, the standard methods for such a demonstration (which go back to Eidelheit \cite{ei}) rely on exploiting the presence of rank one operators to deduce an isomorphism between the underlying Banach spaces. Possibly the $(p,q)$-integral operators could once again play such a substitute role.
\end{rem}

\end{document}